\documentclass[12pt]{amsart}
\pdfoutput=1
\usepackage[margin=1in]{geometry}
\usepackage{amsthm,amssymb,amsmath,hyperref}
\usepackage{color, mdframed, tikz}
\usepackage{graphicx,enumerate}
\usepackage{cite}

\makeatletter
\def\l@section{\@tocline{1}{12pt plus2pt}{0pt}{}{\bfseries}}
\def\l@subsection{\@tocline{2}{0pt}{2pc}{2pc}{}}
\makeatother
\makeatletter
\def\subsection{\@startsection{subsection}{2}{\z@}%
	{-3.25ex\@plus -1ex \@minus -.2ex}%
	{1.5ex \@plus .2ex}%
	{\normalfont\bfseries\boldmath}}
\def\subsubsection{\@startsection{subsubsection}{3}%
	\z@{.5\linespacing\@plus.7\linespacing}{-.5em}%
	{\normalfont\bfseries\boldmath}}
\renewcommand\paragraph{\@startsection{paragraph}{4}{\z@}%
	{3.25ex \@plus1ex \@minus.2ex}%
	{-1em}%
	{\normalfont\normalsize\bfseries}}

\allowdisplaybreaks

\theoremstyle{plain}
\newtheorem{thm}{Theorem}[section]

\newtheorem{cor}[thm]{Corollary}
\newtheorem{lem}[thm]{Lemma}
\newtheorem{prop}[thm]{Proposition}

\theoremstyle{definition}
\newtheorem{defn}[thm]{Definition}

\theoremstyle{remark}
\newtheorem{rem}[thm]{Remark}

\theoremstyle{plain}

\numberwithin{equation}{section}

\newtheorem{thmA}{Theorem}

\theoremstyle{plain} 
\newcommand{\thistheoremname}{}
\newtheorem{genericthm}[thm]{\thistheoremname}

  \newtheorem*{genericthm*}{\thistheoremname}
\newenvironment{namedthm*}[1]
  {\renewcommand{\thistheoremname}{#1}%
   \begin{genericthm*}}
  {\end{genericthm*}}

\newcommand{\eps}{\varepsilon}

\newcommand{\D}{{\mathbb D}}

\newcommand{\R}{{\mathbb R}}

\newcommand{\N}{{\mathbb N}}

\newcommand{\Z}{{\mathbb Z}}

\newcommand{\dist}{\hbox{ \rm dist}}

\newcommand{\calA}{{\mathcal A}}
\newcommand{\calB}{{\mathcal B}}
\newcommand{\calC}{{\mathcal C}}
\newcommand{\calD}{{\mathcal D}}

\newcommand{\calF}{{\mathcal F}}
\newcommand{\calG}{{\mathcal G}}

\newcommand{\calQ}{{\mathcal Q}}
\newcommand{\calR}{{\mathcal R}}

\newcommand{\calT}{{\mathcal T}}

\newcommand{\supp}{{\textrm{supp}}}

\makeatletter
\newcommand{\vast}{\bBigg@{4}}
\newcommand{\Vast}{\bBigg@{5}}
\makeatother

\usepackage{scalerel}[2014/03/10]
\usepackage[usestackEOL]{stackengine}
\def\intavg{\,\ThisStyle{\ensurestackMath{%
    \stackinset{c}{0\LMpt}{c}{0\LMpt}{\SavedStyle-}{\SavedStyle\phantom{\int}}}%
    \setbox0=\hbox{$\SavedStyle\int\,$}\kern-\wd0}\int}

\def\udot#1{\ifmmode\oalign{$#1$\crcr\hidewidth.\hidewidth
    }\else\oalign{#1\crcr\hidewidth.\hidewidth}\fi}

\def\R{\mathbb{R}}
\def\Z{\mathbb{Z}}
\def\T{\mathbb{T}}

\def\beq{\begin{equation}}
\def\eeq{\end{equation}}
\makeatletter
\newcommand{\doublewidetilde}[1]{{%
  \mathpalette\double@widetilde{#1}%
}}
\newcommand{\double@widetilde}[2]{%
  \sbox\z@{$\m@th#1\widetilde{#2}$}%
  \ht\z@=.9\ht\z@
  \widetilde{\box\z@}%
}
\makeatother

\def\one{\mbox{1\hspace{-4.25pt}\fontsize{12}{14.4}\selectfont\textrm{1}}}

\usepackage{tikz}

\makeatletter
\def\@makefnmark{%
  \leavevmode
  \raise.9ex\hbox{\fontsize\sf@size\z@\normalfont\tiny\@thefnmark}}
\makeatother

\begin{document}
	
\title[BBM spaces over Carleson tents]{On the Bourgain--Brezis--Mironescu spaces over Carleson tents}

\author{\'Arp\'ad B\'enyi, Bingyang Hu and Xiaojing Zhou}

\address{\'Arp\'ad B\'enyi: Department of Mathematics, Western Washington University, 516 High Street, Bellingham, WA 98225,  U.S.A.}%
\email{benyia@wwu.edu}

\address{Bingyang Hu: Department of Mathematics, Auburn University, 221 Parker Hall, Auburn, AL 36849, U.S.A.}%
\email{bzh0108@auburn.edu}

\address{Xiaojing Zhou: Department of Mathematics, Auburn University, 221 Parker Hall, Auburn, AL 36849, U.S.A.}%
\email{xiz0003@auburn.edu}

\begin{abstract}
We introduce Carleson analogs of the Bourgain--Brezis--Mironescu spaces $B$ and $B_0$ by measuring mean oscillation over upper Carleson tents. For these spaces, denoted by $B_{\calC}^p$ and $B_{\calC,0}^p$, we prove two types of structural results. First, we show that they contain several natural classes of functions, including BMO/VMO--Carleson spaces, tent-space potential classes, and fractional Sobolev classes. Second, motivated by Zhu's structural theorem for BMO spaces induced by the Bergman metric, we establish decompositions of $B_{\calC}^p$ and $B_{\calC,0}^p$ into bounded-oscillation and bounded-average components. We then revisit the Bourgain--Brezis--Mironescu rigidity phenomenon in the Carleson setting. Although the direct rigidity statement fails for $B_{\calC,0}^p$, we introduce a natural $B_{\calC}^p$-trace and prove that the rigidity theorem survives at the level of traces.
\end{abstract}
\date{\today}

\subjclass[2020]{Primary 42B35; Secondary 46E35, 30H25, 47B35.}
\keywords{Bourgain--Brezis--Mironescu (BBM) spaces, BMO, VMO, Carleson tents, trace, rigidity, structural decomposition}

\maketitle

\tableofcontents

\section{Introduction}

This paper is motivated by the interaction between two fundamental themes in analysis.

\begin{enumerate}
\item The first is the real-variable theory of functions of bounded mean oscillation (BMO). Since its introduction by John and Nirenberg in their celebrated work \cite{JohnNirenberg1961}, BMO has become a central space in harmonic analysis, appearing in endpoint estimates, Carleson measure characterizations, the continuum decomposition of BMO relative to a Haar system, the Calder\'on reproducing formula, paraproducts, the $T1$ theory of Calder\'on--Zygmund operators, and much more; see, for instance, \cite[Chapters 3-4]{Grafakos2014} and \cite[Section 9.5]{MuscaluSchlag2013}.

\vspace{0.1cm}

\item The second is the theory of Carleson tents and tent spaces. Originating in the influential work of Coifman, Meyer and Stein \cite{CoifmanMeyerStein1985}, tent spaces provide a natural language for measuring boundary behavior through conical and Carleson-type regions. This viewpoint has become particularly useful in complex function theory and operator theory, especially in Carleson embedding problems and in BMO/VMO-type characterizations of Hankel-type operators; see, for example, \cite{BekolleBergerCoburnZhu1990,Zhu1992,CohnVerbitsky2000,PauZhaoZhu2016,HuZhou2025,Xia2002}.
\end{enumerate}

With these two themes in mind, we ask what kind of function spaces arise when mean oscillation is measured not over ordinary cubes, but over Carleson tents, and how the resulting Carleson formulation is related to the classical one. 

Our starting point is the Bourgain--Brezis--Mironescu (BBM) space $B$ and its little-oh counterpart $B_0$, introduced in \cite{BourgainBrezisMironescu2015} as a ``common roof'' for several regularity assumptions forcing integer-valued functions to be constant. The purpose of this paper is to develop their Carleson analogs, study their functional properties, and examine to what extent the BBM-rigidity phenomenon persists in the Carleson setting.

\medskip 

We begin by recalling several definitions. Throughout this work, $Q_0:=[0,1)^n$ denotes the unit cube in $\R^n$. For $f\in L^1(Q_0;\mathbb R)$, the space $\textrm{BMO}(Q_0)$ is equipped with the seminorm
$$
\|f\|_{\textrm{BMO}}
:=
\sup_{0<\varepsilon \le 1}\sup_{a\in Q_0}
\left\{M(f,Q_\varepsilon(a)): Q_\varepsilon(a)\subset Q_0\right\},
$$
where $Q_\varepsilon(a)$ denotes the $\varepsilon$-cube centered at $a$, and, for any cube $Q\subset Q_0$,
$$
M(f,Q):=\intavg_Q |f-f_Q|,
\qquad
f_Q:=\intavg_Q f.
$$
As usual, this seminorm becomes a norm after quotienting out the constant functions.

\vspace{0.1cm}

Having fixed the classical BMO seminorm, we now recall the BBM construction. In \cite{BourgainBrezisMironescu2015}, the authors introduced a new function space $B$, together with its little-oh counterpart $B_0$, as a unified framework for several apparently unrelated rigidity results for integer-valued functions. This construction is closely related to BMO: when $n=1$, the space $B$ coincides with $\textrm{BMO}(Q_0)$, while for $n\geq 2$ it strictly contains $\textrm{BMO}(Q_0)$. We now recall the definitions of $B$ and $B_0$.

Let $0<\varepsilon \le 1$, let $\calF(\varepsilon)$ be a collection of mutually disjoint cubes $Q\subseteq Q_0$ with length $\ell(Q)=\varepsilon$ such that $\# \calF(\varepsilon) \le \varepsilon^{1-n}$, and let
\[[f]_{\varepsilon}^B:=\sup_{\calF(\varepsilon)} \left\{ \varepsilon^{n-1} \sum_{Q \in \calF(\varepsilon)} M(f, Q) \right\}.\]
We define the space
\[B=\left\{f\in L^1(Q_0; \mathbb R): \sup_{0<\varepsilon \le 1} [f]_{\varepsilon}^B<+\infty \right\},\]
and its little-oh counterpart space
\[B_0=\left\{f\in B: \overline{\lim_{\varepsilon \to 0}}\,\,[f]_{\varepsilon}^B=0\right\}.\]

A central motivation for introducing $B$ and $B_0$ is the following deep rigidity theorem.

\begin{thmA}[{\cite[Theorem 4]{BourgainBrezisMironescu2015}}]\label{BBMMain}
Let $f$ be a $\Z$--valued function on $Q_0$ such that $f\in B_0$. Then $f$ is constant.
\end{thmA}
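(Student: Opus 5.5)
The plan is to argue by a discrete isoperimetric estimate. If $f$ is integer-valued and nonconstant, we show that $f$ has a definite amount of oscillation, of order $\varepsilon^{1-n}$ cubes each carrying oscillation bounded below, at every small scale $\varepsilon$. This contradicts $[f]_\varepsilon^B\to 0$.

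\textbf{Step 1 (oscillation forced by integer values).} For a cube $Q$ and $\Z$-valued $f$, write $A_k=\{f=k\}\cap Q$ and $\theta_k=|A_k|/|Q|$. Choosing $k$ nearest to $f_Q$, one checks the elementary bound
\[
M(f,Q)\ \ge\ c\,\min\Bigl\{\theta_{k},\,1-\theta_{k}\Bigr\}\quad\text{for all }k,
\]
since $|f-f_Q|\ge 1/2$ on at least one of $A_k$ and $Q\setminus A_k$ whenever $\operatorname{dist}(f_Q,\Z)$ is small. A short case check gives the general case. So a cube where no single value has density above $1-\delta$ has $M(f,Q)\ge c\delta$.

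\textbf{Step 2 (a set of intermediate density).} Since $f$ is nonconstant, we take $E=\{f\le k\}$ for some $k$ with $0<|E|<1$. Replacing $f$ by $\mathbf 1_E$ only decreases $M$ up to a constant, by Step 1 applied to the two-valued truncation. So we reduce to the case $f=\mathbf 1_E$. Fix $\varepsilon=1/N$ and the dyadic grid of $N^n$ cubes. Call a cube \emph{full} if $|E\cap Q|>(1-\delta)|Q|$, \emph{empty} if $|E\cap Q|<\delta|Q|$, and \emph{mixed} otherwise. By Lebesgue differentiation, both full and empty cubes exist for small $\varepsilon$, and each class has total volume comparable to $|E|$ and $1-|E|$ respectively.

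\textbf{Step 3 (discrete isoperimetry; main obstacle).} This is the heart of the argument. We need to produce at least $c\varepsilon^{1-n}$ cubes of side comparable to $\varepsilon$, pairwise disjoint, each with $M\ge c$. The plan is as follows:
\begin{itemize}
\item[(a)] Consider the grid graph on cubes. Take the union of full cubes, which has volume bounded away from $0$ and $1$. The discrete isoperimetric inequality on $\{1,\dots,N\}^n$ gives at least $cN^{n-1}$ adjacent pairs $(Q,Q')$ with $Q$ full and $Q'$ not full.
\item[(b)] For each such pair, look at a doubled or shifted cube $\widetilde Q$ of side $\varepsilon$ straddling $Q\cup Q'$, or a chain of $O(1)$ such cubes. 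Either $Q'$ is itself mixed, or $Q'$ is empty, in which case the cube centered at the common face sees density in $[\delta',1-\delta']$. In both cases we obtain a cube with $M\ge c(\delta)$.
\item[(c)] Each grid cube meets boundedly many of the chosen cubes. Splitting them into $3^n$ subfamilies, we get a disjoint family with $\#\ge cN^{n-1}$. Truncating it to at most $\varepsilon^{1-n}$ members keeps its cardinality $\ge c\varepsilon^{1-n}$.
\end{itemize}
Delicate points are the choice of $\delta$ so that the straddling cube is genuinely mixed, and the boundary effects at $\partial Q_0$. The isoperimetric count in (a) is where the dimension $n-1$ exponent matching the normalization $\varepsilon^{n-1}$ enters. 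I would follow the argument of Bourgain--Brezis--Mironescu here.

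\textbf{Step 4 (conclusion).} Summing over the chosen family gives
\[
[f]_\varepsilon^B\ \ge\ \varepsilon^{n-1}\cdot c\varepsilon^{1-n}\cdot c(\delta)=c'>0
\]
for all small $\varepsilon$. This contradicts $\overline{\lim}_{\varepsilon\to0}[f]_\varepsilon^B=0$, so $f$ is constant.
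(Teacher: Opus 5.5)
The paper does not prove Theorem~A. It quotes the result from Bourgain--Brezis--Mironescu, where rigidity comes out of quantitative estimates for integer-valued functions in $B$. So there is no in-paper argument to match, and your proposal has to stand on its own. It does: it is a correct, direct, qualitative proof, and it takes a different route from the cited source.

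Your argument runs in four steps:
\begin{enumerate}
\item[(i)] You reduce to $f=\mathbf 1_E$ with $0<|E|<1$.
\item[(ii)] You use $L^1$-convergence of averages over the $1/N$-grid to see that the full cubes occupy a fraction of the grid bounded away from $0$ and $1$.
\item[(iii)] You apply the edge-isoperimetric inequality on $\{1,\dots,N\}^n$ to get $\gtrsim N^{n-1}$ adjacent full/non-full pairs.
\item[(iv)] You convert each pair into an $\varepsilon$-cube of density bounded away from $0$ and $1$, and then disjointify.
\end{enumerate}
This gives a short, elementary contradiction showing $\limsup_{\varepsilon\to0}[f]^B_\varepsilon>0$. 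Without further work it does not give the quantitative $L^p$-control of integer-valued functions in all of $B$ (not only $B_0$) that BBM obtain.

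Some points to tighten:
\begin{enumerate}
\item[(1)] In Step 2, the inequality you need is $M(\mathbf 1_{\{f\le k\}},Q)\le 2M(f,Q)$. That is not ``Step 1 applied to the truncation'', which gives the reverse direction. It follows because $\phi(s)=\min\{1,\max\{0,k+1-s\}\}$ is $1$-Lipschitz and equals $\mathbf 1_{\{s\le k\}}$ on $\Z$. Hence $M(\phi\circ f,Q)\le 2\intavg_Q|\phi(f)-\phi(f_Q)|\le 2M(f,Q)$.
\item[(2)] Step 1 itself is fine and needs no ``$\operatorname{dist}(f_Q,\Z)$ small'' caveat. For each $k$, either $|f_Q-k|\ge 1/2$, so $|f-f_Q|\ge 1/2$ on $A_k$; or $|f_Q-k|<1/2$, so $|f-f_Q|>1/2$ off $A_k$.
\item[(3)] The ``delicate points'' in Step 3 are not delicate. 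Take $\delta=1/4$. For a full/empty pair, the cube centered at the common face lies in $Q\cup Q'\subseteq Q_0$, so there are no boundary effects. Its density lies in $[1/2-\delta,1/2+\delta]$. Hence every selected cube has $M(\mathbf 1_E,\cdot)\ge 3/8$, and no chains are needed.
\item[(4)] In (c), a mixed $Q'$ may be selected by up to $2n$ pairs. Each selected cube meets at most $2(2n+1)$ selected cubes. Greedy coloring then yields a disjoint subfamily of size $\gtrsim_n N^{n-1}$.
\end{enumerate}

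Once you cite the grid edge-isoperimetric inequality (e.g.\ Bollob\'as--Leader, or an elementary line-counting argument for sets of grid-density in $[\alpha_0,1-\alpha_0]$), your sketch is complete and does not need to defer to BBM.
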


The \emph{two main goals} of the present paper are:
\begin{enumerate}
\item[$\bullet$] to introduce the Carleson analogs of the spaces $B$ and $B_0$, and to study their functional and structural properties, and

\item[$\bullet$] to investigate whether the rigidity phenomenon in Theorem~\ref{BBMMain}, namely the principle that integer-valued functions satisfying suitable regularity assumptions must be constant, continues to hold in the Carleson setting.
\end{enumerate}
The investigations in this work fit naturally into Feichtinger's ``ranking of function spaces'' framework; see \cite{Feichtinger2015}.

\vspace{0.1cm}

We begin by introducing the appropriate notions needed to define the versions of the BMO-Carleson spaces we alluded to above. In what follows, for any cube $Q \subseteq Q_0$, we let $T_Q^{up}$ denote the \emph{upper Carleson tent} associated to $Q$, defined by
$$
T_Q^{up}:=\left\{(x, t) \in \R^n \times (0, \infty): x \in Q, \;\frac{\ell(Q)}{2} \le t <\ell(Q) \right\}.
$$
Then, for any $1 \le p<\infty$, $Q \subseteq Q_0$, and $f \in L^p(Q_0 \times (0, 1); \R)$, we set
\begin{equation} \label{20260416eq02}
M_{\calC, p}(f, Q):=\left(\intavg_{T_Q^{up}} |f-f_{T_Q^{up}}|^p \right)^{1/p},
\end{equation} 
where 
$$f_{T_Q^{up}}:=\intavg_{T_Q^{up} } f=\frac{1}{|T_Q^{up}|}\int_{T_Q^{up}}f,$$
and
$$
M^*_{\calC, p}(f, Q):=\left(\intavg_{T_Q^{up}} \left( \intavg_{T_Q^{up}} |f(y)-f(z)| dV(z) \right)^p dV(y) \right)^{1/p};
$$
here, $dV$ denotes the standard volume measure on $\R^{n+1}$. Observe that 
\begin{equation} \label{20260502eq30}
M_{\calC, p}(f, Q) \le M^*_{\calC, p}(f, Q) \le 2^{1/p} M_{\calC, p}(f, Q).
\end{equation} 
Let $1 \le p<\infty$. We denote by $\textrm{BMO}^p_{\calC}(Q_0)$ the $\textrm{BMO}$-Carleson space which consists of all measurable functions $f$ on $Q_0 \times (0, 1)$ such that 
$$
\|f\|_{\textrm{BMO}^p_{\calC}}:=\sup_{Q \subseteq Q_0} M_{\calC, p}(f, Q)<+\infty. 
$$
Similarly, we let $\textrm{VMO}_{\calC}^p(Q_0)$ be the $\textrm{VMO}$-Carleson space of measurable functions on $Q_0 \times (0, 1)$ such that
$$
\lim_{\varepsilon \to 0} \sup_{Q \subset Q_0, \; \ell(Q)=\varepsilon} M_{\calC, p}(f, Q)=0. 
$$

Next, we define the Carleson analogs of the $B$-space and its little-oh version. We denote by $B_{\calC}^p$ the collection of all measurable functions $f \in L^p(Q_0 \times (0, 1); \R)$ such that 
$$
\left\|f \right\|_{B_{\calC}^p} :=\sup_{0<\varepsilon \le 1} [f]_{\varepsilon, p}<+\infty,
$$
where, for $0<\varepsilon \le 1$, we denoted
\begin{equation} \label{20260416eq01}
[f]_{\varepsilon, p}:=\sup_{\calF(\varepsilon)} \left\{ \varepsilon^{n-1} \sum_{Q \in \calF(\varepsilon)} M_{\calC, p}(f, Q) \right\}; 
\end{equation}
here, as in the BBM setting, $\calF(\varepsilon)$ denotes a collection of mutually disjoint cubes $Q$ with $\ell(Q)=\varepsilon$ such that $\# \calF(\varepsilon) \le \varepsilon^{1-n},$ and the supremum in \eqref{20260416eq01} is taken over all such collections. It is easy to check that $\| \cdot \|_{B_{\calC}^p}$ defines a semi-norm on $B_{\calC}^p$ and it becomes a norm on the equivalent classes $B_{\calC}^p / \R$. We also write $B_{\calC, 0}^p$ for the space of all measurable functions $f \in B_{\calC}^p$ with
$$
[f]_p:=\overline{\lim_{\varepsilon \to 0}}\,\, [f]_{\varepsilon, p}=0. 
$$
\begin{rem}
\begin{enumerate}
\item The oscillation in \eqref{20260416eq02} is inspired by the mean oscillation over Bergman balls of fixed radius. Such oscillations play a central role in the study of BMO-type spaces in the Bergman metric and in their operator-theoretic applications; see, for instance, \cite{BekolleBergerCoburnZhu1990, HuHuoLanzaniPalenciaWagner2024, Zhu1992, PauZhaoZhu2016}.

\item When $n=1$, the space $B_{\calC}^p$ is closely related to the corresponding Carleson-tent BMO space on the unit disc. More precisely, one may define $\textrm{BMO}_{\calC,p}(\D)$ by replacing cubes in \eqref{20260416eq01} with arcs $I\subset\T$ and using the associated upper Carleson tents. This type of space is naturally connected with the boundedness and compactness of Hankel-type operators, or equivalently with commutators involving the Bergman projection; see \cite{Li1992, Li1994, LiLuecking1994, BekolleBergerCoburnZhu1990,Zhu1992,PauZhaoZhu2016, Luecking1992}.

\item In the original BBM-construction over $Q_0$, the space $B$ is defined using the $L^1$ mean oscillation $M(f,Q)$. This choice is consistent with the classical definition of $\textrm{BMO}(Q_0)$ and the John--Nirenberg theory, which makes the corresponding $L^p$ oscillation norms equivalent. By contrast, in the Bergman/Carleson setting such a John--Nirenberg phenomenon is no longer available: the corresponding BMO spaces $\textrm{BMO}_{\calC,p}(\D)$ generally depend on the exponent $p$; see, for instance, \cite{Zhu1992}. This is one reason why, in the present Carleson setting, it is natural to consider the full family of spaces $B_{\calC}^p$.
\end{enumerate}
\end{rem}

The first part of the paper is devoted to two structural aspects of the spaces $B_{\calC}^p$ and $B_{\calC,0}^p$. The first one is real-variable in nature and identifies several natural classes of functions contained in $B_{\calC}^p$. More precisely, we show that $B_{\calC}^p$ contains:
\begin{enumerate}
    \item[$\bullet$] the BMO--Carleson space over $Q_0$, which may be viewed as an analog of $\textrm{BMO}_{\calC,p}(\D)$;
    \item[$\bullet$] functions whose gradients belong to a scale-invariant tent space over $Q_0$;
    \item[$\bullet$] a fractional Sobolev class on $Q_0\times(0,1)$.
\end{enumerate}
The corresponding little-oh inclusions also hold for $B_{\calC,0}^p$. These results may be regarded as Carleson counterparts of the structural results for the BBM-spaces $B$ and $B_0$; compare with \cite[Examples 1--3 and $1'$--$3'$]{BourgainBrezisMironescu2015}. We refer the reader to Section~\ref{Sec2} for the precise statements and proofs.

\vspace{0.1cm}

The second structural aspect comes from a different, more complex-analytic, point of view. Motivated by Zhu's structural theorem for Bergman-metric BMO spaces \cite{Zhu1992}, we prove a decomposition of the form
$$
B_{\calC}^p
=
\textnormal{a bounded-oscillation space}
+
\textnormal{a bounded-average space},
$$
with an analogous statement for $B_{\calC,0}^p$ after the appropriate vanishing modifications. Thus, the decomposition theorem extends Zhu's Bergman-metric picture to the BBM--Carleson setting; see Theorem~\ref{20260502thm41}.

\medskip 

In the second part of the paper, we return to the main motivation for introducing the Carleson variants of the $B$-spaces: whether the rigidity result in Theorem~\ref{BBMMain} remains valid in the Carleson setting. The answer is \emph{no}, as shown by the following simple observation. 

\begin{lem} \label{20260608lem01}
Let $1 \le p<+\infty$. There exists a non-constant $\Z$-valued function $f$ on $Q_0 \times (0, 1)$ such that $f \in B_{\calC, 0}^p$. 
\end{lem}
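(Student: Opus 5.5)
Our plan is to exploit the fact that the oscillation $M_{\calC,p}(f,Q)$ only sees $f$ on the upper tent $T_Q^{up}=Q\times[\ell(Q)/2,\ell(Q))$, which is a horizontal slab at height comparable to $\ell(Q)$. A function that depends only on the vertical variable $t$ and is constant on each dyadic height band $[2^{-k-1},2^{-k})$ is then an integer-valued candidate. We will adjust it so that it never jumps inside an upper tent of a small cube. The simplest choice we will try first is
$$
f(x,t):=\one_{[1/2,1)}(t),\qquad (x,t)\in Q_0\times(0,1),
$$
which is $\Z$-valued and not constant. It also lies in $L^p(Q_0\times(0,1))$, being bounded.

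The key step is to verify that $M_{\calC,p}(f,Q)=0$ whenever $\ell(Q)=\varepsilon<1/2$. In that case $T_Q^{up}\subset Q\times[\varepsilon/2,\varepsilon)$ and $\varepsilon<1/2$, so $f\equiv 0$ on $T_Q^{up}$. Hence $f_{T_Q^{up}}=0$ and the oscillation vanishes identically. Consequently $[f]_{\varepsilon,p}=0$ for all $\varepsilon<1/2$, and therefore $[f]_p=\overline{\lim}_{\varepsilon\to0}[f]_{\varepsilon,p}=0$.

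It remains to check that $\|f\|_{B^p_{\calC}}<\infty$, that is, to bound $[f]_{\varepsilon,p}$ for $1/2\le\varepsilon\le1$. Since $|f|\le1$, we have $M_{\calC,p}(f,Q)\le 2\|f\|_\infty\le 2$ for every cube $Q$. The family $\calF(\varepsilon)$ has at most $\varepsilon^{1-n}$ members, so
$$
[f]_{\varepsilon,p}\le \varepsilon^{n-1}\cdot\varepsilon^{1-n}\cdot 2=2.
$$
Thus $f\in B^p_{\calC}$, and with the previous step $f\in B^p_{\calC,0}$.

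We expect no real obstacle. The only point needing care is the interpretation of the domain: tents with $\ell(Q)$ near $1$ should not exit $Q_0\times(0,1)$, and even if they did, the boundedness estimate above would be unaffected. If one prefers an example with infinitely many jumps, the same argument applies to $f(x,t)=\lfloor\log_2(1/t)\rfloor$. On each upper tent this function is constant, because $[\ell/2,\ell)$ lies inside one dyadic band only when $\ell$ is a power of $2$. So for general $\ell$ a single jump may occur inside a tent, and one would bound the oscillation by $2$ and use the vanishing-measure fraction to handle the small-scale limit. For this reason we stick with the single-jump example above.
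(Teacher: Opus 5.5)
Your argument is correct and is essentially the paper's proof. The paper uses the same example $\one_{\{t>1/2\}}$ (yours differs only on a null set) and the same observation that $M_{\calC,p}(f,Q)=0$ once $\ell(Q)<1/2$; your explicit bound $[f]_{\varepsilon,p}\le 2$ for $1/2\le\varepsilon\le 1$ spells out the membership $f\in B_{\calC}^p$, which the paper leaves implicit.

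You should drop the closing aside, though: $\lfloor\log_2(1/t)\rfloor$ is \emph{not} in $B_{\calC,0}^p$. For $\varepsilon=3\cdot 2^{-k-1}$, every upper tent is split by the jump at $t=2^{-k}$ in the fixed proportion $1/3:2/3$. Hence $M_{\calC,p}(f,Q)\ge c_p>0$, and $[f]_{\varepsilon,p}\gtrsim c_p$ along this sequence, so there is no vanishing-fraction effect.
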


\begin{proof}
Define 
$$
f(x, t):=\one_{\{t>1/2\}}, \qquad (x, t) \in Q_0 \times (0, 1). 
$$
It is clear that $f$ is a non-constant $\Z$--valued function. To see that $f \in B_{\calC, 0}^p$, it suffices to notice that if we let $0<\varepsilon<1/2$ in the definition of $B_{\calC, 0}^p$, then $M_{\calC, p}(f, Q)=0$, as in this case $T_Q^{\textnormal{up}} \cap \supp f=\emptyset$. This clearly gives the desired claim.
\end{proof}

However, most interestingly, we hope to convince the reader that there is much more to this story: while the rigidity result fails at the level of $B_{\calC,0}^p$ itself, it \emph{does} hold in the $B_{\calC}^p$-trace sense; see Theorem~\ref{20260610BBMCarleson} and a careful development of the $B_{\calC}^p$-trace in Section~\ref{Sec4}.

\medskip 

The structure of the paper is as follows. In Section~\ref{Sec2}, we establish basic inclusion results for the Carleson spaces $B_{\calC}^p$ and $B_{\calC,0}^p$. In particular, we show that these spaces contain natural Carleson analogs of BMO and VMO, functions whose gradients belong to suitable scale-invariant tent spaces, and fractional Sobolev classes. In Section~\ref{Sec3}, we prove structural decomposition theorems for $B_{\calC}^p$ and $B_{\calC,0}^p$, showing that they split into bounded-oscillation and bounded-average components in the BBM--Carleson sense. Finally, in Section~\ref{Sec4}, we revisit the BBM-rigidity theorem in the Carleson setting. Although the direct analog fails for $B_{\calC,0}^p$, we introduce the $B_{\calC}^p$-trace and prove that the rigidity phenomenon survives at the level of traces.

\vspace{0.1cm}

\noindent {\bf Notations.} Throughout this paper, for $a ,b \in  \mathbb{R}$, $a\lesssim b$ means there exists a positive number $C$, which is independent of $a$ and $b$, such that $a\leq C\,b$. Moreover, if both $a \lesssim  b$ and $b\lesssim a$ hold, we write $a \simeq b$. 

\vspace{0.1cm}

\noindent{\bf Acknowledgments.} The first author gratefully acknowledges the Department of Mathematics and Statistics at Auburn University for hosting him during a stimulating research visit, where this project was initiated. The authors also thank the Department of Mathematics at the University of Alabama at Tuscaloosa for its hospitality during a subsequent visit, where further discussions contributed to the development of this work. The first author was supported by an AMS-Simons Research Enhancement Grant for PUI Faculty. The second author was supported by the Simons Travel Grant MPS-TSM-00007213.

\section{First structural results for the $B_{\calC}^p$--spaces}\label{Sec2}

The goal of this section is to collect some inclusions related to the $\textrm{BMO}$-Carleson spaces introduced above.

\begin{prop}\label{prop12} Let $1\le p, q<+\infty$. We have the following inclusions:

(a) If $p <q$, then $B_{\calC}^q \subsetneq B_{\calC}^p$.

(b) $\textrm{BMO}^p_{\calC}(Q_0) \subseteq B_{\calC}^p$, with continuous injection.
\end{prop}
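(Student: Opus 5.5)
For part (a), the inclusion $B_{\calC}^q\subseteq B_{\calC}^p$ comes straight from Hölder's inequality on each tent. Since $\intavg_{T_Q^{up}}$ is an average with respect to a probability measure, we get $M_{\calC,p}(f,Q)\le M_{\calC,q}(f,Q)$ for every cube $Q\subseteq Q_0$. Summing over any admissible family $\calF(\varepsilon)$ then gives $[f]_{\varepsilon,p}\le[f]_{\varepsilon,q}$, and taking the supremum over $\varepsilon$ yields $\|f\|_{B_{\calC}^p}\le\|f\|_{B_{\calC}^q}$. We also need $f\in L^p(Q_0\times(0,1))$, which follows from $L^q\subseteq L^p$ on this finite-measure set. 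The same inequality shows $B_{\calC,0}^q\subseteq B_{\calC,0}^p$, although this is not required.

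The strictness is where the real work lies. I would construct a function in $B_{\calC}^p\setminus B_{\calC}^q$ that lives in a single layer $t\in(1/2,1)$, so that only tents with $\ell(Q)$ close to $1$ can see it. A sequence of singular spikes is the natural choice. Take $f(x,t)=|(x,t)-(x_0,t_0)|^{-\alpha}$ with an interior point, for instance $x_0$ the center of $Q_0$ and $t_0=3/4$. Choose $\alpha$ so that $f\in L^p\setminus L^q$ near the point; in $\R^{n+1}$ this means $\alpha p<n+1\le \alpha q$.

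With this choice, $f$ is supported or altered only near $t_0$. A tent $T_Q^{up}$ meets a neighborhood of height $t_0=3/4$ only when $\ell(Q)\in(3/4,3/2]$, and since $Q\subseteq Q_0$ this forces $\ell(Q)\in(3/4,1]$. Every cube with such an $\varepsilon$ has volume comparable to $1$. Therefore $M_{\calC,p}(f,Q)\lesssim \|f\|_{L^p}$, while the families $\calF(\varepsilon)$ contain at most $\varepsilon^{1-n}\le(4/3)^{n-1}$ cubes. This gives $f\in B_{\calC}^p$.

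To obtain $f\notin B_{\calC}^q$, I would pick a single cube, namely $Q_0$ itself with $\varepsilon=1$, whose tent contains the spike. Then $M_{\calC,q}(f,Q_0)=\infty$, because $f\notin L^q$ locally and subtracting a finite mean does not change this. Actually $f\notin L^q$ already suffices, since membership in $B_{\calC}^q$ requires $L^q$. A cleaner alternative, if one prefers to avoid any bookkeeping with tents, is to truncate the function to the region $t>1/2$. The main check is simply that the finite counting of cubes at large scales keeps the $B_{\calC}^p$ norm finite, and I do not expect real difficulty there.

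For part (b), let $f\in \textrm{BMO}^p_{\calC}(Q_0)$, and take any $\varepsilon$ and any admissible family. We have
$$\varepsilon^{n-1}\sum_{Q\in\calF(\varepsilon)}M_{\calC,p}(f,Q)\le \varepsilon^{n-1}\,\#\calF(\varepsilon)\,\|f\|_{\textrm{BMO}^p_{\calC}}\le\|f\|_{\textrm{BMO}^p_{\calC}},$$
which gives the continuous injection $\|f\|_{B_{\calC}^p}\le\|f\|_{\textrm{BMO}^p_{\calC}}$. The only remaining point is that $f\in L^p(Q_0\times(0,1))$. To see this, note that $Q_0\times(0,1)$ is covered by the tents of the dyadic subcubes of $Q_0$ together with $T_{Q_0}^{up}$. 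On each layer $t\in[2^{-k-1},2^{-k})$, the local $L^p$ norms are controlled by the oscillation plus the mean. The means along a chain of parent tents differ by a bounded multiple of $\|f\|_{\textrm{BMO}^p_{\calC}}$, because adjacent tents overlap in a set of comparable measure. As a result, the means grow at most linearly in $k$, while the layer measures decay like $2^{-k}$. This makes $f\in L^p$, and I expect this chaining step to be the only technical part of (b).
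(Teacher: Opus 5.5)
Your argument follows the paper's route. For (a), you use H\"older on each tent for $B_{\calC}^q\subseteq B_{\calC}^p$. For strictness, you use a function living in the top tent $T_{Q_0}^{up}=Q_0\times[1/2,1)$ and lying in $L^p\setminus L^q$. The paper takes an arbitrary such function extended by zero, and your spike with, e.g., $\alpha=(n+1)/q$ is a concrete instance. Only tents with $\ell(Q)>1/2$ see it, and these have volume $\gtrsim 1$, so $M_{\calC,p}(f,Q)\lesssim\|f\|_{L^p}$. For (b), you use the bound $\varepsilon^{n-1}\#\calF(\varepsilon)\le 1$.

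Two points need fixing.

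First, the untruncated spike $|(x,t)-(x_0,3/4)|^{-\alpha}$ is positive on all of $Q_0\times(0,1)$. So the claim that it is ``supported only near $t_0$'' and therefore invisible to small tents is false as written. Use the truncation $f\one_{\{t>1/2\}}$ that you list as an alternative, which is a special case of the paper's example. Alternatively, observe that away from the singularity $f$ is Lipschitz, so $M_{\calC,p}(f,Q)\lesssim\varepsilon$ for $\varepsilon\le 1/2$.

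Second, in (b) you rightly check that $f\in L^p(Q_0\times(0,1))$. The definition of $B_{\calC}^p$ requires this, and the paper's one-line proof passes over it. However, your justification that ``adjacent tents overlap in a set of comparable measure'' is wrong for a dyadic cube $R$ and its parent $\widehat R$. With $\ell=\ell(R)$, the tents $R\times[\ell/2,\ell)$ and $\widehat R\times[\ell,2\ell)$ are disjoint.

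The fix is to insert a cube $G$ with $R\subseteq G\subseteq\widehat R$ and $\ell(G)=\tfrac32\ell$. Its tent $G\times[\tfrac34\ell,\tfrac32\ell)$ meets $T_R^{up}$ and $T_{\widehat R}^{up}$ in fixed proportions ($\tfrac12$ and $\tfrac12(\tfrac34)^n$) of their volumes. Applying Lemma~\ref{20260502lem01} twice then gives $|f_{T_R^{up}}-f_{T_{\widehat R}^{up}}|\lesssim\|f\|_{\textrm{BMO}^p_{\calC}}$. With that, the means grow at most linearly in $k$, the sum $\sum_k 2^{-k}(1+k)^p$ is finite, and your argument is complete.
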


\begin{proof}
(a) The inclusion $B_{\calC}^q \subseteq B_{\calC}^p$ is clear due to H\"older's inequality. To see that this inclusion is strict, let $f \in L^p(T_{Q_0}^{up}) \backslash L^q(T_{Q_0}^{up})$, where $T_{Q_0}^{up}:=Q_0 \times [1/2, 1)$ is the upper Carleson tent associated to $Q_0$. On the one hand, since
$$
\dist(\supp f, Q_0 \times \{0\}) \ge 1/2,
$$
we have that for any $Q \subseteq Q_0$ with $T_Q^{up} \cap \supp f \neq \emptyset$, $\ell(Q) \ge 1/2$. Therefore 
\begin{align*}
\|f\|_{B_{\calC}^p}&=\sup_{1/2 \le \varepsilon \le 1} [f]_{\varepsilon, p} =\sup_{1/2 \le \varepsilon \le 1} \left\{ \varepsilon^{n-1} \sum_{Q \in \calF(\varepsilon)} \left( \intavg_{T_{Q}^{up}} |f-f_{T_Q^{up}}|^p \right)^{1/p} \right\} \\
& \lesssim \sup_{1/2 \le \varepsilon \le 1} \left\{ \sum_{Q \in \calF(\varepsilon)} \left(\int_{T_Q^{up}} |f-f_{T_Q^{up}}|^p \right)^{1/p} \right\} \lesssim \left(\int_{T_{Q_0}^{up}}|f|^p \right)^{1/p}<+\infty,
\end{align*}
which implies $f \in B_{\calC}^p$; in the second to last estimate, we have used the disjointness of the cubes $Q \in \calF(\varepsilon)$, which implies the disjointness of their Carleson tents $\{T_{Q}^{up}\}_{Q \in \calF(\varepsilon)}$. On the other hand, by definition, 
\begin{equation} \label{20260417eq03}
\|f\|_{B_{\calC}^q} \ge 2^{1/q} \left(\int_{T_{Q_0}^{up}} |f-f_{T_{Q_0}^{up}}|^q \right)^{1/q}. 
\end{equation} 
Since $f \in L^p(T_{Q_0}^{up})$, $|f_{T_{Q_0}^{up}}|<+\infty$. This together with the assumption $f \notin L^q(T_{Q_0}^{up})$ yields that the integral on the righ hand-side of \eqref{20260417eq03} diverges to $+\infty$. Thus $f\not\in B_{\calC}^q$. 

(b) Follows immediately from the estimate $\|f\|_{B_{\calC}^p} \le \|f\|_{\textrm{BMO}^p_{\calC}}$.
\end{proof}

\begin{rem} 
Proposition~\ref{prop12} is consistent with the inclusion properties of $\textrm{BMO}_{\calC}^p(\D)$; see, for instance, \cite{Zhu1992}.
\end{rem} 

Before stating our next result, we recall the notion of \emph{tent space} over $Q_0$; see \cite{CoifmanMeyerStein1985}. Let $1\le p<+\infty$. We denote by $\calT_{1-\frac{p-1}{n}}^p(Q_0)$ the space of measurable functions $f$ on $Q_0 \times (0, 1)$ such that 
\begin{equation}\label{20260418eq01}
\left\|f\right\|_{\calT_{1-\frac{p-1}{n}}^p(Q_0)}:=\sup_{Q \subseteq Q_0} \left(\frac{1}{|Q|^{1-\frac{p-1}{n}}} \int_{T_Q^{up}} |f|^pdV \right)^{1/p}<+\infty. 
    \end{equation}
Similarly, we introduce the little-oh version of this tent space in a natural way. We let $\calT_{1-\frac{p-1}{n}, 0}^p(Q_0)$ be the space of measurable functions $f$ on $Q_0 \times (0, 1)$ such that 
\begin{equation}\label{20260418eq11}
\lim_{\varepsilon\to 0}\sup_{Q \subseteq Q_0, \ell(Q)=\varepsilon} \frac{1}{|Q|^{1-\frac{p-1}{n}}} \int_{T_Q^{up}} |f|^pdV=0. 
    \end{equation}
In particular, the analytic version of this tent space plays an important role in complex function theory;  for example, it features prominently in the $\calQ_p$ and $F(p, q, s)$ embedding problems into tent spaces; see \cite{HuZhou2025, Xiaojie2008}. 

\begin{rem}
When \(p=1\), conditions \eqref{20260418eq01} and \eqref{20260418eq11} can be reformulated as saying that the measure \(|f|\,dV\) is a Carleson measure and a vanishing Carleson measure, respectively.
\end{rem}

\begin{prop} \label{prop13}
Let $1 \le p<+\infty$. If $\nabla f \in \calT_{1-\frac{p-1}{n}}^p(Q_0)$, then $f \in B_{\calC}^p$.
\end{prop}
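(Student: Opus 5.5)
The plan is to bound each oscillation $M_{\calC,p}(f,Q)$ by the gradient, using a Poincaré inequality on the upper tent, and then sum over the family $\calF(\varepsilon)$ with Hölder's inequality.

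\emph{Step 1 (Poincaré on a tent).} For a cube $Q\subseteq Q_0$ with $\ell(Q)=\varepsilon$, the tent $T_Q^{up}=Q\times[\varepsilon/2,\varepsilon)$ is a box in $\R^{n+1}$. Its side lengths are comparable to $\varepsilon$: $n$ sides equal $\varepsilon$ and one equals $\varepsilon/2$. It is therefore a dilate and translate of the fixed convex box $Q_0\times[1/2,1)$. The $L^p$ Poincaré inequality on that fixed box, rescaled, gives
$$
M_{\calC,p}(f,Q)=\Big(\intavg_{T_Q^{up}}|f-f_{T_Q^{up}}|^p\Big)^{1/p}\lesssim \varepsilon\Big(\intavg_{T_Q^{up}}|\nabla f|^p\Big)^{1/p},
$$
with an implicit constant depending only on $n$ and $p$. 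Since $|T_Q^{up}|=\varepsilon^{n+1}/2$, this becomes
$$
M_{\calC,p}(f,Q)\lesssim \varepsilon^{1-\frac{n+1}{p}}\Big(\int_{T_Q^{up}}|\nabla f|^p\Big)^{1/p}.
$$

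\emph{Step 2 (use the tent norm).} By \eqref{20260418eq01},
$$
\int_{T_Q^{up}}|\nabla f|^p\le \|\nabla f\|^p_{\calT^p_{1-\frac{p-1}{n}}}\,|Q|^{1-\frac{p-1}{n}}=\|\nabla f\|^p\,\varepsilon^{n-p+1}.
$$
Substituting into Step 1, the exponent of $\varepsilon$ is
$$
1-\frac{n+1}{p}+\frac{n-p+1}{p}=1-1=0,
$$
so $M_{\calC,p}(f,Q)\lesssim \|\nabla f\|_{\calT}$ uniformly in $Q$. In other words, $f\in \textrm{BMO}^p_{\calC}(Q_0)$, and Proposition~\ref{prop12}(b) would already finish the proof.

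\emph{Step 3 (direct summation, as a check).} One can also sum directly:
$$
\varepsilon^{n-1}\sum_{Q\in\calF(\varepsilon)}M_{\calC,p}(f,Q)\le \varepsilon^{n-1}\cdot\varepsilon^{1-n}\sup_Q M_{\calC,p}(f,Q)\lesssim\|\nabla f\|_{\calT}.
$$
The choice of exponent $1-\frac{p-1}{n}$ is exactly what makes this scaling balance. A finer Hölder summation would use $\sum_Q\int_{T_Q^{up}}|\nabla f|^p$ and the disjointness of the tents. It is not needed, because the sup bound already suffices.

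\emph{Main obstacle.} The delicate point is regularity. One must interpret $\nabla f$ as a weak gradient with $f\in W^{1,p}_{loc}$ on each tent, so that the Poincaré inequality applies. One must also check that its constant is uniform across tents. Uniformity follows by scaling from the single reference box $Q_0\times[1/2,1)$. Finally, membership $f\in L^p(Q_0\times(0,1))$, which the definition of $B_{\calC}^p$ requires, should be either assumed or noted as part of the hypothesis.
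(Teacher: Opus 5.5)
Your argument is correct, and it differs from the paper's in how the tent hypothesis is used.

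\textbf{The shared step.} Both proofs begin with the same rescaled Poincar\'e inequality on the box $T_Q^{up}$, namely $M_{\calC,p}(f,Q)\lesssim \varepsilon^{1-\frac{n+1}{p}}\|\nabla f\|_{L^p(T_Q^{up})}$.

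\textbf{The paper's route.} The paper then keeps the whole family $\calF(\varepsilon)$ in play. It applies the discrete H\"older inequality, which costs $(\#\calF(\varepsilon))^{1/p'}\le \varepsilon^{-\frac{n-1}{p'}}$. It then bounds $\sum_{Q\in\calF(\varepsilon)}\int_{T_Q^{up}}|\nabla f|^p$ by the tent norm times $\sum_{Q\in\calF(\varepsilon)}|Q|^{1-\frac{p-1}{n}}\le \varepsilon^{2-p}$, and checks that all powers of $\varepsilon$ cancel.

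\textbf{Your route.} You observe instead that the cancellation already happens tent by tent. This gives $\sup_{Q\subseteq Q_0} M_{\calC,p}(f,Q)\lesssim \|\nabla f\|_{\calT^p_{1-\frac{p-1}{n}}(Q_0)}$, and you conclude with Proposition~\ref{prop12}(b). This is shorter and proves more: $\nabla^{-1}\calT^p_{1-\frac{p-1}{n}}(Q_0)\subseteq \textrm{BMO}^p_{\calC}(Q_0)$. By the same computation, $\nabla^{-1}\calT^p_{1-\frac{p-1}{n},0}(Q_0)\subseteq \textrm{VMO}^p_{\calC}(Q_0)$. So the gradient summand in the corollary at the end of Section~\ref{Sec2} is in fact absorbed by the BMO/VMO--Carleson summand.

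\textbf{What the paper's route buys.} The H\"older argument uses the hypothesis only through the summed bound $\sup_{\calF(\varepsilon)}\sum_{Q\in\calF(\varepsilon)}\int_{T_Q^{up}}|\nabla f|^p\,dV\lesssim \varepsilon^{2-p}$. For $n\ge 2$ this is strictly weaker than the supremum-type tent condition, since a single tent may then carry mass up to $\varepsilon^{2-p}$ rather than $\varepsilon^{n+1-p}$. So that route would give $f\in B_{\calC}^p$ under a genuinely BBM-type gradient hypothesis, one that does not force $f\in\textrm{BMO}^p_{\calC}(Q_0)$. This parallels how $W^{1,1}(Q_0)\subseteq B$ in the classical setting follows from a summed Poincar\'e estimate. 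The statement as written does not exploit this extra generality.

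Your regularity caveat about $f\in L^p(Q_0\times(0,1))$ is fair; the paper does not address it either.
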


\begin{proof}
Let $Q \subseteq Q_0$ with $\ell(Q)=\varepsilon>0$. By Poincare's inequality, we get
\begin{align*}
M_{\calC, p}(f, Q) 
&\simeq \frac{1}{\varepsilon^{\frac{n+1}{p}}} \left\|f-f_{T_Q^{up}} \right\|_{L^p(T_Q^{up})}  \\
& \lesssim \frac{1}{\varepsilon^{\frac{n+1}{p}}} \cdot \varepsilon \cdot \left\|\nabla f \right\|_{L^p(T_Q^{up})}.
\end{align*}
Thus, for any collection of mutually disjoint $\varepsilon$-cubes $\calF(\varepsilon)$, one has
\begin{align} \label{20260419eq01}
\varepsilon^{n-1} \sum_{Q \in \calF(\varepsilon)} M_{\calC, p}(f, Q)
& \lesssim \varepsilon^{n-1} \sum_{Q \in \calF(\varepsilon)} \frac{1}{\varepsilon^{\frac{n+1}{p}}} \cdot \varepsilon \cdot \left\|\nabla f \right\|_{L^p(T_Q^{up})} \nonumber \\
&= \varepsilon^{n-\frac{n+1}{p}} \sum_{Q \in \calF(\varepsilon)} \left( \int_{T_Q^{up}} |\nabla f|^pdV \right)^{1/p} \nonumber  \\
& \le \varepsilon^{n-\frac{n+1}{p}}  \cdot \frac{1}{\varepsilon^{\frac{n-1}{p'}}} \left( \sum_{Q \in \calF(\varepsilon)}  \int_{T_Q^{up}} |\nabla f|^p dV \right)^{1/p} \\
&  \le \left\| |\nabla f|^p \right\|_{\calT_{1-\frac{p-1}{n}}^p(Q_0)} \cdot \varepsilon^{n-\frac{n+1}{p}} \cdot \frac{1}{\varepsilon^{\frac{n-1}{p'}}} \left( \sum_{Q \in \calF(\varepsilon)} |Q|^{1-\frac{p-1}{n}} \right)^{1/p} \nonumber  \\
& \lesssim \left\| |\nabla f|^p \right\|_{\calT_{1-\frac{p-1}{n}}^p(Q_0)}. \nonumber 
\end{align}
The desired result then follows by taking the supremum over $\calF(\varepsilon)$ on both sides of the above estimate. 
\end{proof}

Let $1 \le p, q<+\infty$ and let $f$ be a measurable function on $Q_0 \times (0, 1)$. We say that $f$ belongs to the fractional Sobolev space $W^{\frac{2}{pq}, pq}(Q_0 \times (0, 1))$ if 
$$
\int_{Q_0 \times (0, 1)} \int_{Q_0 \times (0, 1)} \frac{|f(y)-f(z)|^{pq}}{|y-z|^{n+3}}dV(y)dV(z)<+\infty.
$$

\begin{prop} \label{prop16}
Let $1 \le p, q<+\infty$. Then $W^{\frac{2}{pq}, pq}(Q_0 \times (0, 1))\subseteq B_{\calC}^p$.
\end{prop}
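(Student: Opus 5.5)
The plan is to bound each oscillation $M_{\calC,p}(f,Q)$ by a localized Gagliardo double integral over $T_Q^{up}\times T_Q^{up}$. We then sum over $\calF(\varepsilon)$ using H\"older's inequality and the disjointness of the tents. Throughout, write $r:=pq\ge p$ and set
$$
[f]^r:=\int_{Q_0\times(0,1)}\int_{Q_0\times(0,1)}\frac{|f(y)-f(z)|^{r}}{|y-z|^{n+3}}\,dV(y)\,dV(z).
$$
The exponent $n+3$ is exactly $(n+1)+s r$ with $s=2/r$, where $n+1$ is the ambient dimension. This is what will make the powers of $\varepsilon$ cancel.

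First, a preliminary point: $f$ must lie in $L^p(Q_0\times(0,1))$. Since $[f]<\infty$ and $|y-z|\le \sqrt{n+1}$ on the bounded domain, Fubini gives a point $z_0$ with $\int |f(y)-f(z_0)|^r\,dV(y)<\infty$. Hence $f\in L^r\subseteq L^p$ on this finite-measure set.

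Next, the local estimate. Fix a cube $Q\subseteq Q_0$ with $\ell(Q)=\varepsilon$ and write $T=T_Q^{up}$, so that $|T|\simeq\varepsilon^{n+1}$ and $\textrm{diam}(T)\lesssim\varepsilon$. By \eqref{20260502eq30} and Jensen/H\"older (using $r\ge p$) we get
$$
M_{\calC,p}(f,Q)\le M^*_{\calC,p}(f,Q)\le\Big(\intavg_T\intavg_T|f(y)-f(z)|^{r}\,dV(z)\,dV(y)\Big)^{1/r}.
$$
On $T\times T$ we have $|y-z|^{n+3}\lesssim\varepsilon^{n+3}$. Inserting this weight gives
$$
M_{\calC,p}(f,Q)\lesssim \big(\varepsilon^{-2(n+1)}\varepsilon^{n+3} I_Q\big)^{1/r}=\varepsilon^{\frac{1-n}{r}} I_Q^{1/r},
\qquad
I_Q:=\int_T\int_T\frac{|f(y)-f(z)|^r}{|y-z|^{n+3}}\,dV\,dV.
$$

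Then, the summation. For a family $\calF(\varepsilon)$, apply H\"older with exponents $r'$ and $r$ together with $\#\calF(\varepsilon)\le\varepsilon^{1-n}$:
$$
\varepsilon^{n-1}\sum_{Q\in\calF(\varepsilon)}M_{\calC,p}(f,Q)\lesssim \varepsilon^{n-1}\varepsilon^{\frac{1-n}{r}}\varepsilon^{(1-n)(1-\frac1r)}\Big(\sum_{Q\in\calF(\varepsilon)} I_Q\Big)^{1/r}=\Big(\sum_{Q\in\calF(\varepsilon)} I_Q\Big)^{1/r}.
$$
The cubes in $\calF(\varepsilon)$ are disjoint, so the tents are disjoint, and hence the products $T_Q^{up}\times T_Q^{up}$ are disjoint. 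Therefore $\sum_Q I_Q\le[f]^r$. Taking suprema over $\calF(\varepsilon)$ and then over $\varepsilon$ gives $\|f\|_{B_{\calC}^p}\lesssim[f]$, which is the claim.

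The only delicate step is the exponent bookkeeping: checking that the weight $n+3$ exactly balances the two averaging factors $\varepsilon^{-2(n+1)}$, the cardinality bound $\varepsilon^{(1-n)(1-1/r)}$, and the prefactor $\varepsilon^{n-1}$. Everything else is Jensen/H\"older. A by-product of the argument, which is not needed for the statement, is the little-oh inclusion. All tents of side $\varepsilon$ lie in $Q_0\times(0,\varepsilon)$, so $\sum_Q I_Q\le\int\int_{(Q_0\times(0,\varepsilon))^2}(\cdots)$. This tends to $0$ as $\varepsilon\to0$ by dominated convergence, so in fact $f\in B_{\calC,0}^p$.
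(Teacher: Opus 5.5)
Your proposal is correct and follows essentially the same route as the paper: you bound $M_{\calC,p}(f,Q)$ by the Gagliardo integral localized to $T_Q^{up}\times T_Q^{up}$, extract the factor $\varepsilon^{(1-n)/(pq)}$, and then sum over $\calF(\varepsilon)$ using H\"older, the bound $\#\calF(\varepsilon)\le\varepsilon^{1-n}$, and the disjointness of the tents. The differences are minor: you use a single Jensen/H\"older step where the paper applies H\"older twice (first with exponent $p$, then with $q$ after inserting the weight $|y-z|^{(n+3)/q}$), and your verification that $f\in L^{pq}\subseteq L^p$ fills in a detail the paper leaves implicit.
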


\begin{proof}
Without loss of generality, we may assume $q>1$; the case $q=1$ is easier. Let $Q \subseteq Q_0$ with $\ell(Q)=\varepsilon$. Using again H\"older's inequality, we have 
\begin{align*}
M_{\calC, p}^*(f, Q) 
& \le \left( \frac{1}{|T_Q^{up}|^2} \iint_{T_Q^{up} \times T_Q^{up}} |f(y)-f(z)|^p dV(y)dV(z) \right)^{1/p} \\
& \lesssim \frac{1}{\varepsilon^{\frac{2(n+1)}{p}}} \left( |\varepsilon|^{\frac{n+3}{q}} \iint_{T_Q^{up} \times T_Q^{up}} \frac{|f(y)-f(z)|^p}{|y-z|^{\frac{n+3}{q}}} dV(y)dV(z) \right)^{1/p} \\
& \lesssim  \frac{1}{\varepsilon^{\frac{2(n+1)}{p}}} \cdot |\varepsilon|^{\frac{n+3}{pq}}  \left(\iint_{T_Q^{up} \times T_Q^{up}} \frac{|f(y)-f(z)|^{pq}}{|y-z|^{n+3}} dV(y)dV(z) \right)^{\frac{1}{pq}} \cdot \left(|T_Q^{up}|^{\frac{2}{pq'}} \right) \\
& =\frac{\varepsilon^{\frac{n+3}{pq}} \cdot \varepsilon^{\frac{2(n+1)}{pq'}}}{\varepsilon^{\frac{2(n+1)}{p}}} \cdot \left(\iint_{T_Q^{up} \times T_Q^{up}} \frac{|f(y)-f(z)|^{pq}}{|y-z|^{n+3}} dV(y)dV(z) \right)^{\frac{1}{pq}}. 
\end{align*}
Therefore, for any collection $\calF(\varepsilon)$ of mutually disjoint $\varepsilon$--cubes, one has
\begin{align} \label{20260419eq02}
& \varepsilon^{n-1} \sum_{Q \in \calF(\varepsilon)} M_{\calC, p}^*(f, Q)  \nonumber \\
& \lesssim \varepsilon^{n-1} \cdot  \frac{\varepsilon^{\frac{n+3}{pq}} \cdot \varepsilon^{\frac{2(n+1)}{pq'}}}{\varepsilon^{\frac{2(n+1)}{p}}}  \cdot \sum_{Q \in \calF(\varepsilon)} \left(\iint_{T_Q^{up} \times T_Q^{up}} \frac{|f(y)-f(z)|^{pq}}{|y-z|^{n+3}} dV(y)dV(z) \right)^{\frac{1}{pq}}  \nonumber \\
& \le \varepsilon^{n-1} \cdot  \frac{\varepsilon^{\frac{n+3}{pq}} \cdot \varepsilon^{\frac{2(n+1)}{pq'}}}{\varepsilon^{\frac{2(n+1)}{p}}}  \cdot \frac{1}{\varepsilon^{\frac{n-1}{(pq)'}}} \cdot \left( \sum_{Q \in \calF(\varepsilon)} \iint_{T_Q^{up} \times T_Q^{up}} \frac{|f(y)-f(z)|^{pq}}{|y-z|^{n+3}} dV(y)dV(z)  \right)^{\frac{1}{pq}}  \\
& \le \left(\int_{Q_0 \times (0, 1)} \int_{Q_0 \times (0, 1)} \frac{|f(y)-f(z)|^{pq}}{|y-z|^{n+3}} dV(y)dV(z)  \right)^{\frac{1}{pq}}. \nonumber 
\end{align}
The desired results follows by taking the supremum over $\calF(\varepsilon)$ on both sides of the above estimate. 
\end{proof}

The next result states similar inclusions for the little-oh version of the Carleson $B$-space.

\begin{prop}\label{prop17} 
Let $1\le p, q<+\infty$.

(a) $\textrm{VMO}_{\calC}^p(Q_0) \subseteq B_{\calC, 0}^p$. Moreover, $\textrm{VMO}_{\calC}^p(Q_0)=B_{\calC, 0}^p$ if $n=1$.

(b) If $\nabla f \in \calT_{1-\frac{p-1}{n}, 0}^p(Q_0)$, then $f \in B_{\calC, 0}^p$. 

(c) $W^{\frac{2}{pq}, pq}(Q_0 \times (0, 1)) \subseteq B_{\calC, 0}^p$. 
\end{prop}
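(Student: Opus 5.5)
For all three parts I will reuse the chain of estimates from Propositions~\ref{prop12}(b), \ref{prop13} and \ref{prop16}. I will not just take suprema at the end. Instead I will keep track of where each quantity lives at scale $\varepsilon$, and then let $\varepsilon\to 0$.

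For (a), the inclusion is immediate. For any admissible $\calF(\varepsilon)$ we have $\#\calF(\varepsilon)\le \varepsilon^{1-n}$, so
$$
[f]_{\varepsilon,p}\le \sup_{Q\subset Q_0,\ \ell(Q)=\varepsilon} M_{\calC,p}(f,Q),
$$
and the right-hand side tends to $0$ when $f\in \textrm{VMO}^p_{\calC}(Q_0)$. For the reverse inclusion when $n=1$, note that $\varepsilon^{n-1}=1$ and $\#\calF(\varepsilon)\le 1$. So one admissible family is a single interval $Q$ of length $\varepsilon$, which gives $M_{\calC,p}(f,Q)\le [f]_{\varepsilon,p}$. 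Taking the supremum over $Q$ shows $\sup_{\ell(Q)=\varepsilon}M_{\calC,p}(f,Q)=[f]_{\varepsilon,p}$ exactly. Hence $\overline{\lim}_{\varepsilon\to0}[f]_{\varepsilon,p}=0$ is the VMO condition. The VMO condition is a limit while $[f]_p$ is a limsup, but a limsup of nonnegative quantities equal to $0$ forces the limit to be $0$. Membership in $B^p_{\calC}$ is part of the definition of $B^p_{\calC,0}$, so nothing else needs checking.

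For (b), I will rerun \eqref{20260419eq01} without taking the global tent norm. The Poincaré step on $T_Q^{up}$ is scale-invariant because $T_Q^{up}$ is a dilate of a fixed box. The Hölder step uses $\#\calF(\varepsilon)\le\varepsilon^{1-n}$. After these two steps, I will bound each $\int_{T_Q^{up}}|\nabla f|^p$ by
$$
\eta(\varepsilon)\,|Q|^{1-\frac{p-1}{n}},\qquad \eta(\varepsilon):=\sup_{\ell(Q)=\varepsilon}|Q|^{-(1-\frac{p-1}{n})}\int_{T_Q^{up}}|\nabla f|^p .
$$
The same exponent bookkeeping as in the proof of Proposition~\ref{prop13} then gives $[f]_{\varepsilon,p}\lesssim \eta(\varepsilon)^{1/p}$, and $\eta(\varepsilon)\to0$ by \eqref{20260418eq11}. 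Finiteness of $\|f\|_{B^p_{\calC}}$ needs a separate argument at large scales, because the vanishing condition alone only controls small $\varepsilon$. For $\varepsilon$ bounded below, the finitely many relevant tents lie in $Q_0\times[\varepsilon/2,1)$. There I will either assume $\nabla f\in L^p$, or observe that the tent-space condition with a finite value of $\eta$ at every scale gives finiteness. I will read the hypothesis as including $\nabla f\in\calT^p_{1-\frac{p-1}{n}}(Q_0)$, as is implicit for the little-oh space.

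For (c), I will follow \eqref{20260419eq02}. The exponent of $\varepsilon$ in front of the sum is nonpositive and works out so that
$$
\varepsilon^{n-1}\sum_{Q\in\calF(\varepsilon)}M^*_{\calC,p}(f,Q)\lesssim \left(\iint_{\Omega_\varepsilon\times\Omega_\varepsilon}\frac{|f(y)-f(z)|^{pq}}{|y-z|^{n+3}}\,dV\,dV\right)^{\frac1{pq}},
$$
where $\Omega_\varepsilon:=Q_0\times(0,\varepsilon)$ contains every $T_Q^{up}$ with $\ell(Q)=\varepsilon$. Since the tents in $\calF(\varepsilon)$ are disjoint, their squares $T_Q^{up}\times T_Q^{up}$ are disjoint subsets of $\Omega_\varepsilon\times\Omega_\varepsilon$. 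The double integral is finite, and $\Omega_\varepsilon\times\Omega_\varepsilon$ decreases to a null set, so the right-hand side tends to $0$ by dominated convergence. Combined with \eqref{20260502eq30}, this gives $[f]_p=0$.

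The main obstacle is checking, in (c), that the power of $\varepsilon$ left over really is $\le 0$ rather than just bounded. Bounded would suffice for Proposition~\ref{prop16} but not for the vanishing here. If it comes out exactly $0$, the shrinking-domain argument above still closes the proof. In (b), the delicate point is only the large-scale finiteness.
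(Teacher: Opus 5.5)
Your proposal is correct and is essentially the paper's proof. Part (a) comes directly from the definitions; the forward inclusion tacitly reads $\textrm{VMO}^p_{\calC}\subseteq\textrm{BMO}^p_{\calC}$ to get membership in $B^p_{\calC}$, as the paper also does. Part (b) reruns \eqref{20260419eq01} with the vanishing tent-space quantity in place of the global norm. Part (c) uses \eqref{20260419eq02} together with the vanishing of the integrable kernel's integral over $\bigcup_{Q\in\calF(\varepsilon)}T_Q^{up}\times T_Q^{up}$. The paper gets this from the set having measure at most $\varepsilon^{n+3}$, while you use that it lies in $\Omega_\varepsilon\times\Omega_\varepsilon\downarrow\emptyset$; these amount to the same thing.

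One correction to your commentary on (c): you need the leftover power of $\varepsilon$ to be nonnegative, not nonpositive. A negative power would destroy the estimate. A nonnegative power, i.e.\ a bounded prefactor, already suffices for your shrinking-domain argument, so your remark that ``bounded would not suffice'' is also off. The power is in fact exactly $0$: $M^*_{\calC,p}(f,Q)\lesssim\varepsilon^{(1-n)/(pq)}\bigl(\iint_{T_Q^{up}\times T_Q^{up}}|f(y)-f(z)|^{pq}|y-z|^{-n-3}\,dV(y)\,dV(z)\bigr)^{1/(pq)}$, and the H\"older step over $\calF(\varepsilon)$ contributes $\varepsilon^{(1-n)/(pq)'}$. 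Together these two factors cancel $\varepsilon^{n-1}$.
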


\begin{proof}
(a) follows trivially from the definitions. 

(b) follows immediately from the proof of Proposition \ref{prop13}. Indeed, in \eqref{20260419eq01}, the assumption implies that
\[
\frac{1}{|Q|^{1-\frac{p-1}{n}}}\int_{T_Q^{up}} |\nabla f|^p\, dV
\]
can be made arbitrarily small whenever $\varepsilon>0$ is chosen sufficiently small.

(c) is a consequence of the estimate \eqref{20260419eq02} and the fact that 
$$
\left| \bigcup_{Q \in \calF(\varepsilon)} T_Q^{up} \times T_{Q}^{up} \right| \le \varepsilon^{n+3}.
$$
\end{proof}

The following corollary follows from Propositions~\ref{prop12}-\ref{prop17}.

\begin{cor}
Let $1 \le p, q<+\infty$. Then, 
$$
\textrm{BMO}_{\calC}^p(Q_0)+\nabla^{-1} \calT_{1-\frac{p-1}{n}}^p(Q_0)+W^{\frac{2}{pq}, pq}(Q_0 \times (0, 1)) \subseteq B_{\calC}^p,$$
and
$$
\textrm{VMO}_{\calC}^p(Q_0)+\nabla^{-1} \calT_{1-\frac{p-1}{n}, 0}^p(Q_0)+W^{\frac{2}{pq}, pq}(Q_0 \times (0, 1)) \subseteq B_{\calC, 0}^p.
$$
\end{cor}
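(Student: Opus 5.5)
The plan is to reduce the corollary to the three inclusions already proved, using that $B_{\calC}^p$ and $B_{\calC,0}^p$ are vector spaces. I read $\nabla^{-1}\calT^p_{1-\frac{p-1}{n}}(Q_0)$ as the set of $f$ with $\nabla f\in\calT^p_{1-\frac{p-1}{n}}(Q_0)$, and similarly for the little-oh tent space.

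\textbf{Step 1: closure under sums.} I first check that $B_{\calC}^p$ and $B_{\calC,0}^p$ are closed under addition. For each cube $Q$, Minkowski's inequality in $L^p(T_Q^{up},dV/|T_Q^{up}|)$ and linearity of $f\mapsto f-f_{T_Q^{up}}$ give
\[
M_{\calC,p}(f+g,Q)\le M_{\calC,p}(f,Q)+M_{\calC,p}(g,Q).
\]
Multiplying by $\varepsilon^{n-1}$, summing over $Q\in\calF(\varepsilon)$ and taking the supremum over admissible families yields
\[
[f+g]_{\varepsilon,p}\le[f]_{\varepsilon,p}+[g]_{\varepsilon,p}.
\]
Taking the supremum over $0<\varepsilon\le1$ gives $\|f+g\|_{B_{\calC}^p}\le\|f\|_{B_{\calC}^p}+\|g\|_{B_{\calC}^p}$. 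Taking $\limsup_{\varepsilon\to0}$ instead gives $[f+g]_p\le[f]_p+[g]_p$. By induction both spaces are closed under finite sums.

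\textbf{Step 2: the $B_{\calC}^p$ inclusion.} Each summand of the first sum lies in $B_{\calC}^p$:
\begin{itemize}
\item $\textrm{BMO}^p_{\calC}(Q_0)\subseteq B_{\calC}^p$ by Proposition~\ref{prop12}(b);
\item $\nabla^{-1}\calT^p_{1-\frac{p-1}{n}}(Q_0)\subseteq B_{\calC}^p$ by Proposition~\ref{prop13};
\item $W^{\frac{2}{pq},pq}(Q_0\times(0,1))\subseteq B_{\calC}^p$ by Proposition~\ref{prop16}.
\end{itemize}
Step 1 then gives the first inclusion.

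\textbf{Step 3: the $B_{\calC,0}^p$ inclusion.} This follows in the same way from Proposition~\ref{prop17}(a), (b), (c) together with Step 1.

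\textbf{Expected obstacle.} The only nontrivial point is the subadditivity in Step 1, and it is routine. One minor point of care: Proposition~\ref{prop17}(c) concerns the same Sobolev space $W^{\frac{2}{pq},pq}$ (no little-oh version is needed), and the vanishing there comes from absolute continuity of the finite double integral over sets of small measure.
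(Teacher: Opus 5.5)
Your proposal is correct and matches the paper's argument: the paper simply states that the corollary follows from Propositions~\ref{prop12}--\ref{prop17}, and you invoke exactly those inclusions. Your explicit check that $M_{\calC,p}(\cdot,Q)$ is subadditive, and hence so are $[\cdot]_{\varepsilon,p}$ and $[\cdot]_p$, spells out the closure-under-sums step that the paper leaves implicit.
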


\bigskip 

\section{Structural decomposition of the $B_{\calC}^p$--spaces}\label{Sec3}

In \cite{BekolleBergerCoburnZhu1990}, B\'ekoll\'e-Berger-Coburn-Zhu introduced the BMO spaces on the open unit ball in $\mathbb C^n$ in terms of the Bergman metric and using the $L^2$-norm with respect to volume measure. Then Zhu \cite{Zhu1992} studied this BMO space defined via the volume $L^p$ integral, precisely describing its dependence in the Bergman metric on $p$ as well as providing a structural decomposition for it; see \cite[Theorem 5]{Zhu1992}. Motivated by these results, in this section we show that the space $B_{\calC}^p$ has a similar structural decomposition as $\textrm{BMO}_{\calC}^p(Q_0)$. 

We begin by defining the BBM counterparts of the spaces of bounded oscillation and bounded average over Carleson tents; see also \cite{Zhu1992} for the appropriate versions in Bergman setting.

\begin{defn}
Let $1 \le p<+\infty$ and $f \in L^p(Q_0 \times (0, 1); \R)$. We say that $f$ belongs to the \emph{BBM-bounded average space} $\textrm{BBA}_{\calC}^p(Q_0)$ if 
\begin{equation} \label{20260610eq33}
\|f\|_{\textrm{BBA}_{\calC}^p}:=\sup_{0<\varepsilon \le 1} \sup_{\calF(\varepsilon)} \left\{\varepsilon^{n-1}  \sum_{Q \in \calF(\varepsilon)} \left( \intavg_{T_Q^{up}} |f|^p \right)^{1/p} \right\}<+\infty. 
\end{equation} 
 We say that $f$ belongs to the \emph{BBM-bounded oscillation space} $\textrm{BBO}_{\calC}(Q_0)$ if
$$
\|f\|_{\textrm{BBO}_{\calC}}:=\sup_{0<\varepsilon \le 1} \sup_{\calF(\varepsilon)} \left\{\varepsilon^{n-1} \sum_{Q \in \calF(\varepsilon)} O_{\calC}(f, Q)   \right\}<+\infty; 
$$
here, for any $Q \subseteq Q_0$, $O_{\calC}(f, Q)$ denotes the local oscillation of $f$ on $T_Q^{up}$, defined by
\begin{equation} \label{20260502eq02}
O_{\calC}(f, Q):=\textnormal{ess sup}_{z, z' \in T_Q^{up}} |f(z)-f(z')|,
\end{equation}
while $\calF(\varepsilon)$ is the disjoint union of $\varepsilon$-cubes as in \eqref{20260416eq01}.
\end{defn}
Clearly, we can also define the little-oh versions of these spaces in a natural way. Namely, we say that a function $f$ belongs to $\textrm{BBA}_{\calC, 0}^p$  if $f\in \textrm{BBA}_{\calC}^p$ and 
\[\overline{\lim_{\varepsilon \to 0}}\,\,\sup_{\calF(\varepsilon)} \left\{\varepsilon^{n-1}  \sum_{Q \in \calF(\varepsilon)} \left( \intavg_{T_Q^{up}} |f|^p \right)^{1/p} \right\}=0,\]
while $f$ belongs to $\textrm{BBO}_{\calC, 0}$ if $f\in \textrm{BBO}_{\calC}$ and
\[\overline{\lim_{\varepsilon \to 0}}\,\,\sup_{\calF(\varepsilon)} \left\{\varepsilon^{n-1} \sum_{Q \in \calF(\varepsilon)} O_{\calC}(f, Q)   \right\}=0.\]
Our main result is the following.

\begin{thm} \label{20260502thm41}
Let $1 \le p <+\infty$. Then, $B_{\calC}^p(Q_0) = \textnormal{BBO}_{\calC}(Q_0)+\textnormal{BBA}_{\calC}^p(Q_0)$.
\end{thm}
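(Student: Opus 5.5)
The inclusion $\textnormal{BBO}_{\calC}(Q_0)+\textnormal{BBA}_{\calC}^p(Q_0)\subseteq B_{\calC}^p$ is elementary, so the real work is the reverse inclusion, which I would prove by a dyadic averaging decomposition in the spirit of Zhu \cite{Zhu1992}.

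For the easy inclusion, take any cube $Q$. Trivially $M_{\calC,p}(f,Q)\le O_{\calC}(f,Q)$. Also, by Minkowski's inequality and $|f_{T_Q^{up}}|\le(\intavg_{T_Q^{up}}|f|^p)^{1/p}$,
$$
M_{\calC,p}(f,Q)\le 2\Big(\intavg_{T_Q^{up}}|f|^p\Big)^{1/p}.
$$
Summing over $\calF(\varepsilon)$, multiplying by $\varepsilon^{n-1}$ and using the triangle inequality for $M_{\calC,p}(\cdot,Q)$ gives $\|f_1+f_2\|_{B_{\calC}^p}\le \|f_1\|_{\textnormal{BBO}_{\calC}}+2\|f_2\|_{\textnormal{BBA}_{\calC}^p}$.

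For the converse, let $f\in B_{\calC}^p$. Use the Whitney-type partition of $Q_0\times(0,1)$ into the dyadic tents $T_R^{up}$, where $R$ ranges over the dyadic subcubes of $Q_0$. These tents are disjoint and cover $Q_0\times(0,1)$, and each one is itself an upper tent of an admissible cube. Define
$$
f_1:=\sum_{R\ \textrm{dyadic}} f_{T_R^{up}}\,\one_{T_R^{up}},\qquad f_2:=f-f_1 .
$$
Now fix $Q$ with $\ell(Q)=\varepsilon$, and assume $\varepsilon\le 1/4$, say; large $\varepsilon$ is handled by the trivial bound $\varepsilon^{n-1}\#\calF(\varepsilon)\le 1$ together with finitely many top-level tents. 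The tent $T_Q^{up}$ meets at most $C(n)$ dyadic tents $T_R^{up}$, all with $\ell(R)\in\{2^{-k},2^{-k-1}\}$, where $2^{-k}\simeq\varepsilon$.

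Then the two pieces are controlled as follows.

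\emph{The BBA piece.} On each $T_R^{up}$ we have $f_2=f-f_{T_R^{up}}$. Since $|T_R^{up}|\simeq|T_Q^{up}|$,
$$
\Big(\intavg_{T_Q^{up}}|f_2|^p\Big)^{1/p}\lesssim \sum_{R:\,T_R^{up}\cap T_Q^{up}\neq\emptyset} M_{\calC,p}(f,R).
$$

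\emph{The BBO piece.} $O_{\calC}(f_1,Q)$ is at most the maximum of $|f_{T_R^{up}}-f_{T_{R'}^{up}}|$ over pairs $R,R'$ in this bounded family. It therefore suffices to treat neighbouring pairs, meaning cubes adjacent in $x$ at the same layer, or parent--child across consecutive layers. For such a pair I choose an auxiliary cube $Q'\subseteq Q_0$ of side comparable to $2^{-k}$; for the vertical case, side $3\cdot 2^{-k-2}$ works. I choose it, shifting inward near $\partial Q_0$, so that $T_{Q'}^{up}$ intersects both $T_R^{up}$ and $T_{R'}^{up}$ in sets of measure comparable to $\varepsilon^{n+1}$. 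Comparing averages through these overlaps gives
$$
|f_{T_R^{up}}-f_{T_{R'}^{up}}|\lesssim M_{\calC,1}(f,R)+M_{\calC,1}(f,R')+2M_{\calC,1}(f,Q')\le M_{\calC,p}(f,R)+M_{\calC,p}(f,R')+2M_{\calC,p}(f,Q'),
$$
using Hölder for the last step.

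Summing over $Q\in\calF(\varepsilon)$, I get
$$
\varepsilon^{n-1}\sum_{Q\in\calF(\varepsilon)}\Big[O_{\calC}(f_1,Q)+\Big(\intavg_{T_Q^{up}}|f_2|^p\Big)^{1/p}\Big]\lesssim \varepsilon^{n-1}\sum_{S\in\calG}M_{\calC,p}(f,S),
$$
where $\calG$ collects all the cubes $R,R',Q'$ generated above.

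The step I expect to be the main obstacle is bounding the right-hand side by $C\|f\|_{B_{\calC}^p}$. The plan is to split $\calG$ into a bounded number $C(n)$ of subfamilies, each admissible in the sense of \eqref{20260416eq01} at a single scale $\delta$. Each cube of $\calG$ has side $\delta$ from a fixed finite list of values comparable to $\varepsilon$, and each is charged by at most $C(n)$ cubes $Q\in\calF(\varepsilon)$, so $\#\calG\le C\#\calF(\varepsilon)\le C\varepsilon^{1-n}$. Coloring by side length and by a bounded-degree overlap graph yields subfamilies of mutually disjoint cubes of equal side $\delta$. Each such subfamily can be further cut into $C$ pieces of cardinality at most $\delta^{1-n}$, because $\delta\simeq\varepsilon$ and $n\ge1$. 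Since $\varepsilon^{n-1}\simeq\delta^{n-1}$, each subfamily contributes at most $C[f]_{\delta,p}$.

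The delicate points are the boundary adjustments of $Q'$ near $\partial Q_0$ and verifying that the overlap multiplicity is uniformly bounded. Once these are checked, we obtain $\|f_1\|_{\textnormal{BBO}_{\calC}}+\|f_2\|_{\textnormal{BBA}_{\calC}^p}\lesssim\|f\|_{B_{\calC}^p}$. The same bookkeeping also yields the vanishing version for $B_{\calC,0}^p$.
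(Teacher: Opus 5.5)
Your proposal is correct and follows essentially the same route as the paper: the same decomposition $f_1=f_{T^{up}_{R(z)}}$ over dyadic tents, a bounded covering of $T_Q^{up}$ by dyadic tents for the $\textnormal{BBA}$ part, and chains of dyadic tents linked through auxiliary non-dyadic cubes (comparing averages on overlaps) for the $\textnormal{BBO}$ part, followed by regrouping into boundedly many admissible single-scale families. Your regrouping step is slightly more careful than the paper's: you color the overlap graph, then cut each subfamily to cardinality at most $\delta^{1-n}$ and use $\varepsilon^{n-1}\simeq\delta^{n-1}$.
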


The result follows from Proposition~\ref{mainprop1} and Proposition~\ref{mainprop2}, which we state and prove below. It is also worth mentioning that the structural decomposition stated above admits a natural little-oh version.

\begin{thm}\label{20260502thm410}
Let $1 \le p <+\infty$. Then, $B_{\calC, 0}^p(Q_0) = \textnormal{BBO}_{\calC, 0}(Q_0)+\textnormal{BBA}_{\calC, 0}^p(Q_0)$.
\end{thm}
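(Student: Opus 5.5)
Following Zhu's argument in the Bergman setting \cite[Theorem 5]{Zhu1992}, I would prove the two inclusions of Theorem~\ref{20260502thm410} separately. The easy inclusion is $\textnormal{BBO}_{\calC,0}+\textnormal{BBA}_{\calC,0}^p\subseteq B_{\calC,0}^p$. For every $Q$ we have
$$
M_{\calC,p}(g,Q)\le O_{\calC}(g,Q), \qquad M_{\calC,p}(h,Q)\le 2\Big(\intavg_{T_Q^{up}}|h|^p\Big)^{1/p},
$$
the second by Minkowski's inequality together with $|h_{T_Q^{up}}|\le (\intavg_{T_Q^{up}}|h|^p)^{1/p}$. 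Since $M_{\calC,p}(g+h,Q)\le M_{\calC,p}(g,Q)+M_{\calC,p}(h,Q)$, we get $[g+h]_{\varepsilon,p}\le \sup_{\calF(\varepsilon)}\varepsilon^{n-1}\sum O_{\calC}(g,Q)+2\sup_{\calF(\varepsilon)}\varepsilon^{n-1}\sum(\intavg|h|^p)^{1/p}$. Taking $\overline{\lim}_{\varepsilon\to0}$ gives $g+h\in B_{\calC,0}^p$.

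For the converse, take $f\in B_{\calC,0}^p$ and build a piecewise-constant averaging. Tile $Q_0\times(0,1)$ by the dyadic upper tents $T_I^{up}$, with $I$ ranging over the dyadic subcubes of $Q_0$. These tents are pairwise disjoint and cover $Q_0\times(0,1)$. Set
$$
g:=\sum_{I\ \mathrm{dyadic}} f_{T_I^{up}}\one_{T_I^{up}}, \qquad h:=f-g.
$$
Then I must show two things.
\begin{enumerate}
\item[(i)] The quantity $\varepsilon^{n-1}\sum_{Q\in\calF(\varepsilon)}(\intavg_{T_Q^{up}}|h|^p)^{1/p}$ is controlled by oscillations $M_{\calC,p}(f,\cdot)$ over comparable tents.
\item[(ii)] $O_{\calC}(g,Q)$ is controlled by differences of averages $|f_{T_I^{up}}-f_{T_J^{up}}|$ for adjacent dyadic $I,J$ meeting $T_Q^{up}$.
\end{enumerate}

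Geometrically, for $\ell(Q)=\varepsilon$ the tent $T_Q^{up}$ lies in the union of at most $C(n)$ dyadic tents $T_I^{up}$ with $\ell(I)\in\{2^{-k},2^{-k-1}\}$, where $2^{-k-1}<\varepsilon\le 2^{-k}$. Each such $T_I^{up}$ lies in a slightly enlarged non-dyadic tent $T_{\tilde Q}^{up}$ with $\ell(\tilde Q)\simeq\varepsilon$. The main technical nuisance is that $T^{up}_{\tilde Q}$ has height range $[\ell(\tilde Q)/2,\ell(\tilde Q))$, so covering two dyadic generations requires a chain of two or three tents of sizes $\varepsilon,2\varepsilon,4\varepsilon$. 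Each difference of averages is bounded by $C\,M_{\calC,p}(f,\tilde Q)$ (with $p\ge1$, via Hölder). Similarly,
$$
\Big(\intavg_{T_Q^{up}}|h|^p\Big)^{1/p}\lesssim \sum_{I}M_{\calC,p}(f,I),
$$
summed over the boundedly many relevant $I$.

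It then remains to pass from the sums over $Q\in\calF(\varepsilon)$ to sums over admissible families at comparable scales. This is the main obstacle. Each $Q\in\calF(\varepsilon)$ generates $C(n)$ cubes $\tilde Q$ at scale $\simeq\varepsilon$ (the sizes are not exactly $\varepsilon$, and the cubes may overlap). I would handle this in three steps.
\begin{enumerate}
\item[(a)] Split the generated cubes into $C(n)$ subfamilies, each consisting of mutually disjoint cubes of one fixed side length $\delta\in\{\varepsilon,2\varepsilon,4\varepsilon\}$, one enlarged cube per $Q$, using a coloring argument based on bounded overlap.
\item[(b)] Each subfamily has cardinality at most $\varepsilon^{1-n}\le 4^{n-1}\delta^{1-n}$, so it can be split further into $4^{n-1}$ admissible families $\calF(\delta)$. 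The weight satisfies $\varepsilon^{n-1}\le\delta^{n-1}$.
\item[(c)] If an enlarged cube sticks out of $Q_0$ (when $Q$ is near the boundary), replace it by a translate inside $Q_0$ containing the relevant dyadic tents. This is possible once $\delta\le 1/4$; for $\varepsilon$ near $1$ only boundedly many tents are involved, which is harmless.
\end{enumerate}
This gives
$$
\sup_{\calF(\varepsilon)}\varepsilon^{n-1}\sum\big(O_{\calC}(g,Q)+(\intavg|h|^p)^{1/p}\big)\lesssim \sum_{j=0}^{2}[f]_{2^j\varepsilon,p}.
$$
Taking $\overline{\lim}_{\varepsilon\to0}$ yields $g\in\textnormal{BBO}_{\calC,0}$ and $h\in\textnormal{BBA}_{\calC,0}^p$. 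The same bound without limits shows membership in $\textnormal{BBO}_{\calC}$ and $\textnormal{BBA}^p_{\calC}$ for $\varepsilon$ bounded away from $0$, which proves Theorem~\ref{20260502thm41} as well.
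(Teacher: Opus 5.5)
Your decomposition $g=\sum_I f_{T_I^{up}}\one_{T_I^{up}}$, $h=f-g$ is exactly the paper's $f_1,f_2$. Your easy inclusion, your estimate (i) for $h$, and your coloring/counting scheme agree with Steps I and II of the paper, which leaves the little-oh version as a routine modification of exactly this argument.

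\textbf{The gap is in step (ii)}, where you compare averages over neighbouring dyadic tents.
\begin{itemize}
\item The claim that each $T_I^{up}$ lies in a slightly enlarged tent $T_{\tilde Q}^{up}$ is false. Because $T_{\tilde Q}^{up}$ has height window $[\ell(\tilde Q)/2,\ell(\tilde Q))$, the inclusion $T_I^{up}\subseteq T_{\tilde Q}^{up}$ forces $\ell(\tilde Q)=\ell(I)$, and then $\tilde Q=I$. So no upper tent contains two distinct dyadic tents, and no difference $|f_{T_I^{up}}-f_{T_J^{up}}|$ can be bounded by $C\,M_{\calC,p}(f,\tilde Q)$ for a single cube $\tilde Q$.
\item The fallback chain of tents of sizes $\varepsilon,2\varepsilon,4\varepsilon$ fails too. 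Their windows $[\varepsilon/2,\varepsilon)$, $[\varepsilon,2\varepsilon)$, $[2\varepsilon,4\varepsilon)$ are pairwise disjoint. Averages over disjoint tents are not controlled by the oscillations on them: a function equal to $0$ on one and $1$ on the other has zero oscillation on both.
\item $T_Q^{up}$ itself is not a uniform bridge between the two dyadic generations either. For $\varepsilon$ just above $2^{-k-1}$ it meets the level-$2^{-k}$ tents only in the thin slab $2^{-k-1}\le t<\varepsilon$, yet the values $f_{T_I^{up}}$ on those tents still enter $O_{\calC}(g,Q)$.
\end{itemize}

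\textbf{The missing ingredient is an overlap principle}, namely Lemma~\ref{20260502lem01}: if $\ell(Q_1)\simeq\ell(Q_2)$ and $|T_{Q_1}^{up}\cap T_{Q_2}^{up}|\gtrsim|T_{Q_1}^{up}|$, then $|f_{T_{Q_1}^{up}}-f_{T_{Q_2}^{up}}|\lesssim M_{\calC,p}(f,Q_1)+M_{\calC,p}(f,Q_2)$. Combine it with auxiliary cubes that \emph{overlap}, rather than contain, the two dyadic tents:
\begin{itemize}
\item For horizontal neighbours $I,J$ of side $s$, take the translate of $I$ by $s/2$ towards $J$.
\item For a parent $I$ of side $s$ and a child $J$, take a cube $G$ with $J\subseteq G\subseteq I$ and $\ell(G)=3s/4$. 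Its window $[3s/8,3s/4)$ straddles $t=s/2$ and meets $T_I^{up}$ and $T_J^{up}$ in fixed proportions.
\end{itemize}
This gives $|f_{T_I^{up}}-f_{T_J^{up}}|\lesssim M_{\calC,p}(f,I)+M_{\calC,p}(f,G)+M_{\calC,p}(f,J)$. Consequently:
\begin{itemize}
\item The dyadic cubes themselves enter the bound, not only auxiliary cubes.
\item The scales involved form a fixed finite set such as $2^{-k}$, $2^{-k-1}$, $\tfrac34\,2^{-k}$, not $\varepsilon,2\varepsilon,4\varepsilon$. Your final estimate should be a finite sum of $[f]_{\delta,p}$ over these scales, which still tends to $0$ as $\varepsilon\to0$.
\item These auxiliary cubes lie in $I\cup J$ or in the parent, so they are automatically contained in $Q_0$ and your step (c) is unnecessary.
\end{itemize}
With these changes, your steps (a)--(b) go through as you describe.
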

The proof of Theorem~\ref{20260502thm410} follows from a straightforward modification of the proof of Theorem~\ref{20260502thm41} and is left to the interested reader.

\begin{prop}\label{mainprop1}
Let $1 \le p < +\infty$. Then,
\[
\textnormal{BBO}_{\calC}(Q_0)+\textnormal{BBA}_{\calC}^p(Q_0)
\subseteq B_{\calC}^p(Q_0).
\]
\end{prop}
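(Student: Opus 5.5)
The plan is to reduce the inclusion to two pointwise estimates on a single tent $T_Q^{up}$, and then sum over the admissible families $\calF(\varepsilon)$.

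Let $f=g+h$ with $g\in \textnormal{BBO}_{\calC}(Q_0)$ and $h\in \textnormal{BBA}_{\calC}^p(Q_0)$. Fix $0<\varepsilon\le 1$, a family $\calF(\varepsilon)$ and a cube $Q\in\calF(\varepsilon)$. By Minkowski's inequality,
$$
M_{\calC,p}(f,Q)\le M_{\calC,p}(g,Q)+M_{\calC,p}(h,Q).
$$
It is enough to bound each term by the corresponding local quantity appearing in the definitions of the two spaces. Multiplying by $\varepsilon^{n-1}$, summing over $Q\in\calF(\varepsilon)$, and taking suprema over $\calF(\varepsilon)$ and $\varepsilon$ then gives
$$
\|f\|_{B_{\calC}^p}\le \|g\|_{\textnormal{BBO}_{\calC}}+2\|h\|_{\textnormal{BBA}_{\calC}^p}.
$$
In particular $f\in B_{\calC}^p$, and the inclusion is continuous. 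We also need $f\in L^p(Q_0\times(0,1))$. This is part of the standing hypotheses on $h$. For $g$, apply the BBO condition with $\varepsilon=1$ and $\calF(1)=\{Q_0\}$: then $g$ is essentially bounded on the top tent, and the same reasoning at each scale shows $g$ is locally bounded. Integrability on all of $Q_0\times(0,1)$ should be imposed as part of the definition, as in the paper's setup where $f\in L^p$ is assumed from the start.

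For the oscillation term, note that for a.e.\ $z\in T_Q^{up}$,
$$
|g(z)-g_{T_Q^{up}}|\le \intavg_{T_Q^{up}}|g(z)-g(z')|\,dV(z')\le O_{\calC}(g,Q).
$$
Hence $M_{\calC,p}(g,Q)\le O_{\calC}(g,Q)$.

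For the average term, use Minkowski's inequality together with $|h_{T_Q^{up}}|\le \bigl(\intavg_{T_Q^{up}}|h|^p\bigr)^{1/p}$, which holds by H\"older's or Jensen's inequality. This gives
$$
M_{\calC,p}(h,Q)\le 2\left(\intavg_{T_Q^{up}}|h|^p\right)^{1/p}.
$$

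These two estimates are the entire argument, and neither is delicate. The only point needing a little care is the measure-theoretic one: $O_{\calC}$ is defined as an essential supremum over pairs $(z,z')$. To justify the pointwise bound for a.e.\ $z$, I would apply Fubini to the null set of exceptional pairs.
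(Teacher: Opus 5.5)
Your proposal is correct and follows essentially the same route as the paper: it bounds $M_{\calC,p}(g,Q)\le O_{\calC}(g,Q)$ and $M_{\calC,p}(h,Q)\lesssim \left(\intavg_{T_Q^{up}}|h|^p\right)^{1/p}$, then sums over $\calF(\varepsilon)$. The paper obtains the first bound through $M_{\calC,p}\le M^*_{\calC,p}$ from \eqref{20260502eq30}, which is precisely your pointwise averaging estimate, and your remark that $L^p$-integrability must be built into the definition of $\textnormal{BBO}_{\calC}$ agrees with the paper's standing assumption $f\in L^p(Q_0\times(0,1))$.
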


\begin{proof}
We show each of the inclusions 
$$\textnormal{BBO}_{\calC}(Q_0) \subseteq B_{\calC}^p(Q_0)\,\,\,\, \text{and}\,\,\,\, \textnormal{BBA}_{\calC}^p(Q_0) \subseteq B_{\calC}^p(Q_0).$$

Observe first that, by \eqref{20260502eq30}, we have
$$
M_{\calC, p}(f, Q)  \simeq \left(\intavg_{T_Q^{up}} \left( \intavg_{T_Q^{up}} |f(y)-f(z)| dV(z) \right)^p dV(y) \right)^{1/p} \le O_{\calC}(f, Q),
$$
which gives the first inclusion.

Then, using H\"older's inequality, it is easy to see that 
$$
M_{\calC, p}(f, Q) \lesssim \left(\intavg_{T_Q^{up}} |f|^p \right)^{1/p}, 
$$
which clearly gives the second inclusion. 
\end{proof}

The remainder of this section is devoted to proving the reverse inclusion in Proposition~\ref{mainprop1}, stated below as Proposition~\ref{mainprop2}. An important ingredient in the second step of the proof of this proposition is contained in the following simple lemma; see also Xia's work \cite{Xia2002}.

\begin{lem}  \label{20260502lem01}
Let $1 \le p<+\infty$ and let $Q_1, Q_2 \subseteq Q_0$. If
$$
\ell(Q_1) \simeq \ell(Q_2) \qquad \textrm{and} \qquad |T_{Q_1}^{up} \cap T_{Q_2}^{up}| \gtrsim |T_{Q_1}^{up}|,
$$
then  
\begin{equation} \label{20260502eq03}
\left|f_{T_{Q_1}^{up}}-f_{T_{Q_2}^{up}} \right| \lesssim M_{C, p}(f, Q_1)+M_{C, p}(f, Q_2). 
\end{equation} 
\end{lem}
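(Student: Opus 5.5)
Write $T_1:=T_{Q_1}^{up}$, $T_2:=T_{Q_2}^{up}$ and $E:=T_1\cap T_2$. The plan is to average the constant $f_{T_1}-f_{T_2}$ over the common region $E$ and insert $f$. Since $|E|\gtrsim |T_1|>0$, for a.e. $z\in E$ we have
$$
\left|f_{T_1}-f_{T_2}\right| \le |f(z)-f_{T_1}|+|f(z)-f_{T_2}|.
$$
Averaging this inequality over $z\in E$ gives
$$
\left|f_{T_1}-f_{T_2}\right| \le \frac{1}{|E|}\int_{E}|f-f_{T_1}|\,dV+\frac{1}{|E|}\int_{E}|f-f_{T_2}|\,dV.
$$

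Each term is then enlarged from $E$ to the full tent and compared with the tent average. Since $E\subseteq T_i$ and $|E|\gtrsim |T_1|\simeq |T_2|$, we get
$$
\frac{1}{|E|}\int_{E}|f-f_{T_i}|\,dV \le \frac{|T_i|}{|E|}\,\intavg_{T_i}|f-f_{T_i}|\,dV \lesssim \intavg_{T_i}|f-f_{T_i}|\,dV .
$$
Here $|T_i|=\tfrac12\ell(Q_i)^{n+1}$, so the hypothesis $\ell(Q_1)\simeq\ell(Q_2)$ gives $|T_2|\simeq|T_1|\lesssim|E|$. This is the only place where the comparability of side lengths is used; it handles the $i=2$ term, which the overlap hypothesis (stated relative to $|T_1|$) does not control directly.

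Finally, Hölder's (or Jensen's) inequality with exponent $p\ge1$ gives $\intavg_{T_i}|f-f_{T_i}|\le M_{\calC,p}(f,Q_i)$, which yields \eqref{20260502eq03}. The implied constant depends only on the two comparability constants in the hypotheses. None of these steps is a real obstacle; the one point needing care is the volume comparison $|T_2|\lesssim|E|$ for the second term.
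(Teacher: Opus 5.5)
Your proof is correct and follows essentially the same route as the paper. The paper inserts $f_E$ for $E=T_{Q_1}^{up}\cap T_{Q_2}^{up}$, which is equivalent to your pointwise averaging over $E$; it then enlarges each integral to the full tent using $|E|\gtrsim|T_{Q_i}^{up}|$ and finishes with H\"older, and your explicit use of $\ell(Q_1)\simeq\ell(Q_2)$ to get $|T_{Q_2}^{up}|\lesssim|E|$ fills in the step the paper dismisses with ``similarly''.
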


\begin{proof}
Let $E:=T_{Q_1}^{up} \cap T_{Q_2}^{up}$. Then
$$
\left|f_{T_{Q_1}^{up}}-f_{T_{Q_2}^{up}} \right| \le \left|f_{T_{Q_1}^{up}}-f_E \right|+\left|f_E-f_{T_{Q_2}^{up}} \right|.
$$
Note that 
\begin{align*}
\left|f_{T_{Q_1}^{up}}-f_E \right|
&=\frac{1}{|E|} \left| \int_E \left(f_{T_{Q_1}^{up}}-f(z)\right)dV(z) \right|  \\
& \le \frac{|T_{Q_1}^{up}|}{|E|} \cdot \frac{1}{|T_{Q_1}^{up}|} \int_{T_{Q_1}^{up}}\left|f_{T_{Q_1}^{up}}-f(z)\right|dV(z) \\
& \lesssim M_{\calC, p}(f, Q_1). 
\end{align*}
Similarly, one has
$$
\left|f_E-f_{T_{Q_2}^{up}} \right| \lesssim M_{\calC, p}(f, Q_2). 
$$
The desired claim \eqref{20260502eq03} follows by combining the two estimates above. 
\end{proof}

\begin{prop}\label{mainprop2}
Let $1 \le p < +\infty$. Then,
\[
B_{\calC}^p(Q_0)
\subseteq
\textnormal{BBO}_{\calC}(Q_0)+\textnormal{BBA}^p_{\calC}(Q_0).
\]
\end{prop}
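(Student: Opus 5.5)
Following Zhu's proof of \cite[Theorem 5]{Zhu1992}, we will decompose $f\in B_{\calC}^p$ as $f=f_1+f_2$, where $f_1$ is a piecewise-constant ``averaged'' version of $f$ and $f_2:=f-f_1$. The plan is to fix a Whitney-type dyadic decomposition of $Q_0\times(0,1)$. For each $k\ge 1$ and each dyadic cube $I\subseteq Q_0$ with $\ell(I)=2^{-k}$, consider the box $R_I:=I\times[2^{-k},2^{-k+1})$. We will associate to $R_I$ an enlarged cube $\widetilde I\subseteq Q_0$ with $\ell(\widetilde I)=2^{-k+1}$ (so $T_{\widetilde I}^{up}=\widetilde I\times[2^{-k},2^{-k+1})\supseteq R_I$), adjusting near $\partial Q_0$ so that $\widetilde I\subseteq Q_0$. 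We then define
$$
f_1:=\sum_{I} f_{T_{\widetilde I}^{up}}\,\one_{R_I},\qquad f_2:=f-f_1 .
$$

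\emph{Step 1: $f_2\in \textrm{BBA}^p_{\calC}$.} Take $Q\subseteq Q_0$ with $\ell(Q)=\varepsilon$. The tent $T_Q^{up}$ meets at most $C(n)$ boxes $R_I$, and these satisfy $\ell(I)\simeq\varepsilon$. On each such box,
$$\int_{R_I\cap T_Q^{up}}|f-f_{T_{\widetilde I}^{up}}|^p\le \int_{T_{\widetilde I}^{up}}|f-f_{T_{\widetilde I}^{up}}|^p\simeq \varepsilon^{n+1}M_{\calC,p}(f,\widetilde I)^p .$$
Hence $\big(\intavg_{T_Q^{up}}|f_2|^p\big)^{1/p}\lesssim\sum_{I:R_I\cap T_Q^{up}\neq\emptyset}M_{\calC,p}(f,\widetilde I)$. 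We then need to bound $\varepsilon^{n-1}\sum_{Q\in\calF(\varepsilon)}$ of this expression. We will split the family $\{\widetilde I\}$ arising from $\calF(\varepsilon)$ into $C(n)$ subfamilies, each consisting of mutually disjoint cubes of a fixed side $2^{-k+1}\simeq\varepsilon$ and having cardinality $\lesssim \#\calF(\varepsilon)$. This is possible because each $Q$ gives boundedly many $I$, and each $\widetilde I$ overlaps boundedly many others. After possibly splitting further so that each count is $\le (2^{-k+1})^{1-n}$, each subfamily is an admissible $\calF(2^{-k+1})$, and its contribution is $\lesssim [f]_{2^{-k+1},p}\le\|f\|_{B^p_{\calC}}$. (If $2^{-k+1}>1$, the one exceptional scale is handled directly by $\|f\|_{L^p}$.)

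\emph{Step 2: $f_1\in\textrm{BBO}_{\calC}$.} Since $f_1$ is constant on each $R_I$,
$$O_{\calC}(f_1,Q)\le \max|f_{T_{\widetilde I}^{up}}-f_{T_{\widetilde J}^{up}}|,$$
where the maximum runs over pairs $R_I,R_J$ meeting $T_Q^{up}$. Such pairs are at bounded scale ratio, so we may connect $\widetilde I$ to $\widetilde J$ by a chain of boundedly many cubes $\widetilde I=P_0,P_1,\dots,P_m=\widetilde J$, all of side $\simeq\varepsilon$, contained in a fixed dilate of $Q$ intersected with $Q_0$, with consecutive tents overlapping in a fixed fraction of volume. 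The chain also covers the change of scale between $t\in[2^{-k},2^{-k+1})$ and $t\in[2^{-k-1},2^{-k})$, using intermediate cubes whose tents straddle both layers. Lemma~\ref{20260502lem01} then gives $O_{\calC}(f_1,Q)\lesssim\sum_{j}M_{\calC,p}(f,P_j)$, and the sum over $Q\in\calF(\varepsilon)$ is regrouped into boundedly many admissible disjoint families exactly as in Step 1. Together with Step 1 this yields $f=f_1+f_2\in\textrm{BBO}_{\calC}+\textrm{BBA}^p_{\calC}$.

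\emph{Main obstacle.} The delicate part is the combinatorics in Steps 1--2. The auxiliary cubes (the $\widetilde I$ and the chain cubes $P_j$) have sides $2^{-k}$ or $2^{-k\pm1}$ rather than exactly $\varepsilon$, and they may overlap. One must re-pack them into boundedly many admissible families $\calF(\varepsilon')$ satisfying both disjointness and the cardinality constraint $\#\le(\varepsilon')^{1-n}$, at a scale $\varepsilon'\simeq\varepsilon$ for which the normalizations $\varepsilon^{n-1}$ and $(\varepsilon')^{n-1}$ are comparable. A further difficulty is that the cubes must remain inside $Q_0$ near the boundary, so the chains must be built with cubes shifted inward. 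We would first handle this by a coloring argument on the dyadic grid, using a bounded overlap number, and by dropping or splitting families whose cardinality slightly exceeds the bound.
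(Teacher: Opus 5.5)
Your proposal is correct and follows essentially the same route as the paper. If $\widetilde I$ is taken to be the dyadic parent of $I$, your $f_1$ is exactly the paper's $f_1(z)=f_{T_{R(z)}^{up}}$. Your Steps 1 and 2 then reproduce the paper's covering-and-regrouping argument for $f_2$, and its chain argument for $f_1$ (intermediate cubes whose tents straddle two consecutive layers, combined with Lemma~\ref{20260502lem01}). Your explicit splitting to enforce the cardinality bound $\#\le(\varepsilon')^{1-n}$ at comparable scales $\varepsilon'\simeq\varepsilon$ is slightly more careful than the paper's.
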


\begin{proof}

 Let $f \in B_{\calC}^p(Q_0)$, and denote by $\calD(Q_0)$ the collection of all dyadic off-springs of $Q_0$. Consequently, for any $z \in Q_0 \times (0, 1)$, there exists a unique $R \in \calD(Q_0)$ such that $z \in T_R^{up}$, and we denote this dyadic off-spring by $R(z)$. Define now 
$$
f_1(z):=\intavg_{T_{R(z)}^{up}} f=f_{T_{R(z)}^{up}}, \quad \textrm{and} \quad f_2(z):=f(z)-f_1(z), \qquad z \in Q_0 \times (0, 1),
$$
so that $f=f_1+f_2$. We divide the rest of the proof into two steps.

\vspace{0.1in} 

\noindent \textit{Step I: \underline{We claim that $f_2 \in \textnormal{BBA}_{\calC}^p(Q_0)$.} }

Fix (arbitrary) $0<\varepsilon \le 1$ and disjoint union of $\varepsilon$-cubes $\calF(\varepsilon)$; we need to estimate the term 
$$
\calA_{\varepsilon, \calF(\varepsilon)}(f_2):=\varepsilon^{n-1} \sum_{Q \in \calF(\varepsilon)} \left( \intavg_{T_Q^{up}} |f_2|^p \right)^{1/p}. 
$$
Let $Q \in \calF(\varepsilon)$. Note first that if $Q \in \calD(Q_0)$, then 
$$
\left( \intavg_{T_Q^{up}} |f_2|^p \right)^{1/p}=M_{\calC, p}(f, Q). 
$$
If $Q \notin \calD(Q_0)$, then observe that there is a finite collection of dyadic cubes $R \in \calD(Q_0)$, denoted by $\calR(Q)$, such that
$$
T_Q^{up} \subseteq \bigcup_{R \in \calR(Q)} T_R^{up}, 
$$
with $\# \calR(Q) \lesssim_n 1$. Indeed, if we let $k \in \N$ be such that $2^{-k-1} \le \ell(Q)=\varepsilon<2^{-k}$, then it is easy to check that $\calR(Q)$ can be taken to be the collection of all dyadic off-springs in $\calD_k(Q_0) \cup \calD_{k+1}(Q_0)$ satisfying $R \cap Q \neq \emptyset$. Here, $\calD_k(Q_0)$ is the collection of all dyadic off-springs of $Q_0$ with sidelength $2^{-k}$. Therefore, for $Q \notin \calD(Q_0)$ one has 
\begin{align*}
\left( \intavg_{T_Q^{up}} |f_2|^p \right)^{1/p}
& \le \left( \frac{1}{|T_Q^{up}|} \sum_{R \in \calR(Q)} \int_{T_R^{up}}|f-f_{T_R^{up}}|^p  \right)^{1/p} \\
& \lesssim \left( \sum_{R \in \calR(Q)} \intavg_{T_R^{up}} |f-f_{T_R^{up}}|^p \right)^{1/p} \\
& \le \sum_{R \in \calR(Q)} M_{\calC, p}(f, R). 
\end{align*} 
Combining the two possible cases about an arbitrary $Q\in\calF(\varepsilon)$, we get
\begin{align} \label{20260502eq01}
\calA_{\varepsilon, \calF(\varepsilon)} & \lesssim \varepsilon^{n-1} \sum_{Q \in \calF(\varepsilon)} \sum_{R \in \calR(Q)} M_{\calC, p}(f, R) \nonumber \\
& = \varepsilon^{n-1} \sum_{Q \in \calF(\varepsilon)} \left[ \sum_{R \in \calR(Q) \cap \calD_k(Q_0)} M_{\calC, p}(f, R)+ \sum_{R \in \calR(Q) \cap \calD_{k+1}(Q_0)} M_{\calC, p}(f, R) \right] \nonumber \\
& \lesssim \underbrace{2^{-k(n-1)} \sum_{Q \in \calF(\varepsilon)} \sum_{R \in \calR(Q) \cap \calD_k(Q_0)} M_{\calC, p}(f, R)}_{S_1} \nonumber  \\
& \qquad +\underbrace{2^{-(k+1)(n-1)} \sum_{Q \in \calF(\varepsilon)} \sum_{R \in \calR(Q) \cap \calD_{k+1}(Q_0)} M_{\calC, p}(f, R)}_{S_2}. 
\end{align} 
Consider the first summation in \eqref{20260502eq01}. We observe that, for
distinct \(Q,Q'\in\calF(\varepsilon)\), it may happen that $\calR(Q)\cap \calR(Q')\cap \calD_k(Q_0)\neq \emptyset$. 
However, for each fixed \(R\in \calD_k(Q_0)\), since \(\ell(R)\simeq \ell(Q)\)
whenever \(R\in\calR(Q)\), and since the cubes in \(\calF(\varepsilon)\) are
pairwise disjoint, the cube \(R\) can belong to \(\calR(Q)\) for at most $\lesssim_n 1$ choices of \(Q\in\calF(\varepsilon)\). Therefore, one can divide the collection of dyadic cubes 
$$
\bigcup_{Q \in \calF(\varepsilon)} \{R\}_{R \in \calR(Q) \cap \calD(Q_0)} 
$$
into a union of a finite collection of dyadic cubes, denoted by $\calF_1(2^{-k}), \dots \calF_{C_n}(2^{-k})$, satisfying for any $j \in \{1, \dots C_n\}$,
\begin{enumerate}
    \item [$\bullet$] $\# \calF_j(2^{-k}) \lesssim \varepsilon^{1-n}$;
    \item [$\bullet$] $R \cap R'=\emptyset$ for any $R, R' \in \calF_j(2^{-k})$ (since both $R$ and $R'$ are dyadic). 
\end{enumerate}
Here, $C_n$ is an absolute dimensional constant. Therefore, we have 
\begin{equation*}
S_1\lesssim \sum_{j=1}^{C_n} \left(2^{-k(n-1)} \sum_{R \in \calF_j(2^{-k})} M_{\calC, p}(f, R) \right) \lesssim  \left\|f \right\|_{B_{\calC}^p}
\end{equation*}
Note that the same argument applies to the second summation $S_2$ in
\eqref{20260502eq01}. Hence,
\[
\calA_{\varepsilon,\calF(\varepsilon)}
\lesssim
\|f\|_{B_{\calC}^p},
\]
where we note that, crucially, the implicit constant depends only on \(n\). Therefore, 
$$
\left\|f_2 \right\|_{\textnormal{BBA}_{\calC}^p}=\sup_{0<\varepsilon \le 1} \sup_{\calF(\varepsilon)} \calA_{\varepsilon, \calF(\varepsilon)} \lesssim \|f\|_{B_{\calC}^p}. 
$$
Thus the claim in \emph{Step I} holds. 

\medskip 

\noindent\textit{Step II: \underline{We claim that $f_1 \in \textnormal{BBO}_{\calC}(Q_0)$.}} 

Again, take any $0<\varepsilon \le 1$ and any disjoint union of $\varepsilon$-cubes $\calF(\varepsilon)$. In this case, we will have to estimate the term 
$$
\calB_{\varepsilon, \calF(\varepsilon)}(f_1):=\varepsilon^{n-1} \sum_{Q \in \calF(\varepsilon)}O_{\calC}(f_1, Q), 
$$
where $O_{\calC}(f_1, Q)$ is defined as in \eqref{20260502eq02}. Recall that 
$$
\|f\|_{\textnormal{BBO}_{\calC}(Q_0)}=\sup_{0<\varepsilon \le 1} \sup_{\calF(\varepsilon)} \calB_{\varepsilon, \calF(\varepsilon)}(f_1).
$$
We follow a similar argument as in \textit{Step I}. Let $Q \in \calF(\varepsilon)$. If $Q \in \calD(Q_0)$, then 
\begin{align*}
O_{\calC}(f_1, Q)
&=\textnormal{ess sup}_{z, z' \in T_Q^{up}} \left| f_{T_{R(z)}^{up}}-f_{T_{R(z')}^{up}} \right|\\
&=\textnormal{ess sup}_{z, z' \in T_Q^{up}} \left| f_{T_Q^{up}}-f_{T_Q^{up}} \right|=0.
\end{align*}
Therefore, it suffices to consider the case when $Q \notin \calD(Q_0)$. The idea here is to explore the ``chain" structure between ``adjacent" upper Carleson tents, and this is where Lemma~\ref{20260502lem01} becomes useful.

In this case, take any \(z,z'\in T_Q^{up}\)
such that \(R(z)\neq R(z')\) (otherwise,
\(f_{T_{R(z)}^{up}}-f_{T_{R(z')}^{up}}=0\)). Again, let \(k\in\mathbb N\)
be such that \(2^{-k-1}\le \ell(Q)=\varepsilon<2^{-k}\). Then
\[
\ell(R(z)),\ell(R(z'))\in
\{2^{-k-2},\,2^{-k-1},\,2^{-k},\,2^{-k+1}\}.
\]
Therefore, one can cover
\[
T_{R(z)}^{up}\cup T_Q^{up}\cup T_{R(z')}^{up}
\]
by a finite collection of upper Carleson tents whose base cubes are dyadic
cubes belonging to \(\calD_{k+2}(Q_0)\), \(\calD_{k+1}(Q_0)\),
\(\calD_k(Q_0)\), or \(\calD_{k-1}(Q_0)\), with the cardinality of this
collection bounded above by an absolute dimensional constant. In particular, one can
find a chain of dyadic cubes
\[
R_1,\dots,R_N\subseteq Q_0,\qquad N\le C_n,
\]
such that
\begin{enumerate}
    \item \(R_1=R(z)\) and \(R_N=R(z')\);
    \item \((T_{R_i}^{up})^\circ\cap (T_{R_j}^{up})^\circ=\emptyset\), \; \(1\le i<j\le N\);
    \item \(\mathcal H^{n}(\overline{T_{R_j}^{up}}\cap \overline{T_{R_{j+1}}^{up}})>0\),
    \; \(1\le j\le N-1\).
\end{enumerate}
One possible way of choosing such a chain of dyadic cubes is as follows. Assume that
\(\ell(R(z))\ge \ell(R(z'))\), and denote by \(R(z,z')\) the unique dyadic
ancestor of \(R(z')\) such that \(\ell(R(z,z'))=\ell(R(z))\). Then
\begin{enumerate}
    \item[(a)] first, we choose a chain of dyadic cubes of side length \(\ell(R(z))\)
    connecting \(R(z)\) to \(R(z,z')\);
    \item[(b)] second, we connect \(R(z,z')\) to \(R(z')\) by taking all dyadic cubes
    lying between them, namely those contained in \(R(z,z')\) and containing \(R(z')\).
\end{enumerate}
Next, we refine the above collection as follows. Without loss of generality, we may assume $\ell(R_j) \ge \ell(R_{j+1}), \; j=0, \dots, N-1$. Then, when we pass from $R_j$ to $R_{j+1}$,  
\begin{enumerate}
    \item [(i)] if $\ell(R_j)=\ell(R_{j+1})$, since $\mathcal H^{n}(\overline{T_{R_j}^{up}}\cap \overline{T_{R_{j+1}}^{up}})>0$, then we can take a cube $G_j \notin \calD(Q_0)$ with 
    $$
    \ell(G_j)=\ell(R_j) \quad \textrm{and} \quad |T_{R_j}^{up} \cap T_{G_j}^{up}|=|T_{R_j}^{up} \cap T_{G_{j+1}}^{up}|=|T_{R_j}^{up}|/3;
    $$
    \item [(ii)] if $\ell(R_j)=2\ell(R_{j+1})$, using the assumption $\mathcal H^{n}(\overline{T_{R_j}^{up}}\cap \overline{T_{R_{j+1}}^{up}})>0$ again, we can take a cube $G_j \notin \calD(Q_0)$ with 
    $$
    \ell(G_j)=3\ell(R_j)/4, \quad |T_{G_j}^{up} \cap T_{R_j}^{up}|=3|T_{G_j}^{up}|/16,
    $$
    and
    $$
    |T_{G_j}^{up} \cap T_{R_{j+1}}^{up}|=|T_{R_{j+1}}^{up}|/2. 
    $$
\end{enumerate}
Consequently, the above construction gives us a chain of cubes 
\begin{equation} \label{20260502eq10}
R_1, G_1, R_2, G_2, \dots, R_{N-1}, G_{N-1}, R_N \subseteq Q_0
\end{equation} 
with 
\begin{enumerate}
    \item [(1)] $R_1=R(z)$ and $R_N=R(z')$;
    \item [(2)] $\ell(R_i) \simeq \ell(G_j)$ for any $1 \le i \le N$ and $1 \le j \le N-1$;
    \item [(3)] There exists some absolute constant $c>0$, such that $|T_{R_j}^{up} \cap T_{G_j}^{up}| \ge c|T_{R_j}^{up}|$ and $|T_{G_j}^{up} \cap T_{R_{j+1}}^{up}| \ge c|T_{R_{j+1}}^{up}|$. 
\end{enumerate}
We refer the reader to Figure~\ref{Fig2} for an example for such a chain of cubes. 

\begin{figure}[htbp]
\centering
\resizebox{0.96\textwidth}{!}{
\begin{tikzpicture}[every node/.style={font=\tiny}]

\tikzset{
  dyadic/.style={draw=black, thick, fill=gray!25},
  aux/.style={draw=black, thick, dashed},
  pt/.style={circle, fill=black, inner sep=.8pt}
}

\filldraw[dyadic] (0,0) rectangle (2,2);
\filldraw[dyadic] (2,0) rectangle (4,2);
\filldraw[dyadic] (4,0) rectangle (6,2);
\filldraw[dyadic] (6,0) rectangle (8,2);

\node at (0.95, 1.7) {$R_1$};
\node at (2.95, 1.7) {$R_2$};
\node at (4.95, 1.7) {$R_3$};
\node at (7.5, -0.25) {$R_4$};

\draw[aux] (1,-0.6) rectangle (3,1.4);
\draw[aux] (3,-0.6) rectangle (5,1.4);
\draw[aux] (5,-0.6) rectangle (7,1.4);

\node at (2,-.8) {$G_1$};
\node at (4,-.8) {$G_2$};
\node at (6,-.8) {$G_3$};

\filldraw[dyadic] (7,1) rectangle (8,2);

\filldraw[dyadic] (7.5,1.5) rectangle (8,2);

\draw[aux] (6.5,0.5) rectangle (8,2);
\draw[aux] (7.25,1.25) rectangle (8,2);


\node[below] at (7.5, 1.05) {$R_5$};

\node[right] at (7.9, 1.82) {$R_6$};

\node[below] at (6.6,0.57) {$G_4$};

\node[right] at (7.9, 1.2) {$G_5$};

\node[pt] at (0.65,0.95) {};
\node at (0.49,1) {$z$};

\node[pt] at (7.72,1.72) {};
\node at (7.8, 1.9) {$z'$};

\end{tikzpicture}
}
\caption{An example of a chain of cubes connecting $z$ and $z'$ with $N=6$: the shaded squares represent the cubes $R_i$, while the dashed squares represent the auxiliary cubes $G_i$.}
\label{Fig2}
\end{figure}

Thus, by Lemma \ref{20260502lem01}, we have 
\begin{align*}
|f_{T_{R(z)}^{up}}-f_{T_{R(z')}^{up}}|
&=|f_{T_{R_1}^{up}}-f_{T_{R_N}^{up}}| \\
& \le \sum_{j=1}^{N-1} \left|f_{T_{R_j}^{up}}-f_{T_{G_j}^{up}} \right|+\sum_{j=1}^{N-1} |f_{T_{G_j}^{up}}-f_{T_{R_{j+1}}^{up}}| \\
& \lesssim \sum_{j=1}^N M_{\calC, p}(f, R_j)+\sum_{j=1}^{N-1} M_{\calC, p}(f, G_j). 
\end{align*}
Observe that, in the above estimate, the choice of the chain
\eqref{20260502eq10} depends on the points $z$ and $z'$, or more precisely,
on the associated dyadic upper Carleson tents $R(z)$ and $R(z')$.
However, for each fixed $z \in T_Q^{up}$, the tent $R(z)$ has only finitely many choices; moreover, the number of such possibilities is
bounded by a constant depending only on the dimension $n$. This means that one can find a collection of dyadic cubes $\calR(Q)$ and a collection of auxiliary (non-dyadic) cubes $\calG(Q)$, such that 
\begin{align*}
O_{\calC}(f_1, Q)
&=\textnormal{ess sup}_{z, z' \in T_Q^{up}} \left|f_{T_{R(z)}^{up}}-f_{T_{R(z')}^{up}} \right| \\
&\lesssim \sum_{R \in \calR(Q)} M_{\calC, p}(f, R)+\sum_{G \in \calG(Q)} M_{\calC, p}(f, G),
\end{align*}
with $\# \calR(Q), \calG(Q) \lesssim_n 1$. Moreover, for any $Q \in \calR(Q) \cap \calG(Q)$, we have
\begin{equation} \label{20260502eq11}
\ell(Q) \in 
\left\{2^{-k-2},\,2^{-k-1},\,2^{-k},\,2^{-k+1}, \frac{3}{4} \cdot 2^{-k-2},\,\frac{3}{4} \cdot 2^{-k-1},\,\frac{3}{4} \cdot 2^{-k},\, \frac{3}{4} \cdot 2^{-k+1} \right\}.
\end{equation} 
Hence, 
\begin{align*}
\calB_{\varepsilon, \calF(\varepsilon)}(f_1)
& =\varepsilon^{n-1} \sum_{Q \in \calF(\varepsilon)} O_{\calC}(f_1, Q) \\
& \lesssim \varepsilon^{n-1} \sum_{Q \in \calF(\varepsilon)} \left( \sum_{R \in \calR(Q)} M_{\calC, p}(f, R)+\sum_{G \in \calG(Q)} M_{\calC, p}(f, G) \right)
\end{align*}
Arguing as in the last part of \textit{Step I}, we may further partition the collection of cubes
\[
\bigcup_{Q \in \calF(\varepsilon)} \left(\calR(Q) \cup \calG(Q) \right)
\]
into subcollections of mutually disjoint cubes. The sidelengths of these cubes belong to the set specified in \eqref{20260502eq11}, and each subcollection has cardinality at most $\varepsilon^{1-n}$. Therefore, 
$$
\calB_{\varepsilon, \calF(\varepsilon)} \lesssim \|f\|_{B_{\calC}^p}
$$
and the desired claim in \textit{Step II} follows by taking the supremum over all $0<\varepsilon \le 1$ and all possible choices for disjoint $\varepsilon$-cubes $\calF(\varepsilon)$. The proof is complete. 
\end{proof}

\begin{rem}
The decomposition in Theorem~\ref{20260502thm41} is motivated by Zhu's decomposition of Bergman-metric BMO spaces into bounded-oscillation and bounded-average parts; see \cite{Zhu1992}. We point out, however, that the present proof is different in nature. In the Bergman setting, one has access to complex-analytic tools such as the Berezin transform and the Bergman projection. In the present Carleson-tent setting, the argument is instead based on elementary geometric properties of upper tents and on finite chains of cubes.
\end{rem}

\bigskip

\section{The Bourgain--Brezis--Mironescu rigidity theorem revisited}\label{Sec4}

We begin this section by recording a straightforward consequence of Theorem \ref{BBMMain}.

\begin{lem} \label{20260608lem02}
Let $1 \le p<+\infty$, $g: Q_0 \to \Z$ be a measurable function, and assume that $F_g \in B_{\calC, 0}^p$, where
$$
F_g(x, t):=g(x), \qquad (x, t) \in Q_0 \times (0, 1).
$$
Then $g$ is a constant. In particular, $F_g$ is also a constant.
\end{lem}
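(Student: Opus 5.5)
The plan is to reduce to Theorem~\ref{BBMMain}: we show that $g\in B_0$ with $[g]^B_\varepsilon\lesssim[F_g]_{\varepsilon,p}$, and then let $\varepsilon\to0$. Since $F_g$ does not depend on $t$, for any cube $Q\subseteq Q_0$ we have $T_Q^{up}=Q\times[\ell(Q)/2,\ell(Q))$. Its average is
$$
(F_g)_{T_Q^{up}}=\frac{1}{|Q|\,\ell(Q)/2}\int_{\ell(Q)/2}^{\ell(Q)}\int_Q g(x)\,dx\,dt=g_Q .
$$
It follows that
$$
M_{\calC,p}(F_g,Q)=\Big(\intavg_Q |g-g_Q|^p\Big)^{1/p}\ \ge\ \intavg_Q|g-g_Q|=M(g,Q),
$$
where the inequality is H\"older's (or Jensen's).

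Now fix $\varepsilon\in(0,1]$ and an admissible family $\calF(\varepsilon)$. The admissibility conditions are the same in the BBM definition and in \eqref{20260416eq01}: mutually disjoint cubes of side $\varepsilon$ with $\#\calF(\varepsilon)\le\varepsilon^{1-n}$. We also need $Q\subseteq Q_0$, and this holds in both settings since $F_g$ lives on $Q_0\times(0,1)$. Summing the pointwise inequality therefore gives $\varepsilon^{n-1}\sum_{Q\in\calF(\varepsilon)}M(g,Q)\le\varepsilon^{n-1}\sum_{Q\in\calF(\varepsilon)}M_{\calC,p}(F_g,Q)$. Taking suprema yields $[g]^B_\varepsilon\le[F_g]_{\varepsilon,p}$.

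Consequently $\sup_\varepsilon[g]^B_\varepsilon\le\|F_g\|_{B^p_{\calC}}<\infty$ and $\overline{\lim}_{\varepsilon\to0}[g]^B_\varepsilon\le[F_g]_p=0$. For the integrability requirement, $F_g\in L^p(Q_0\times(0,1))$ means $g\in L^p(Q_0)\subseteq L^1(Q_0)$. Hence $g\in B_0$ and $g$ is $\Z$-valued, so Theorem~\ref{BBMMain} gives that $g$ is constant, and then $F_g$ is constant as well.

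There is no real obstacle here. The only point needing care is verifying that the tent average reduces exactly to the cube average $g_Q$, which uses the $t$-independence of $F_g$ and Fubini, and that the admissible families coincide in the two definitions.
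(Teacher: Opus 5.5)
Your proposal is correct and follows the paper's proof essentially verbatim. You compute $(F_g)_{T_Q^{up}}=g_Q$, use H\"older to get $M(g,Q)\le M_{\calC,p}(F_g,Q)$, deduce $[g]^B_\varepsilon\le[F_g]_{\varepsilon,p}$, and invoke Theorem~\ref{BBMMain}. Your extra checks (that $g\in L^1(Q_0)$, that $\sup_\varepsilon[g]^B_\varepsilon<\infty$ so that $g\in B$, and that the admissible families match) are details the paper leaves implicit.
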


\begin{proof}
Note that
\begin{align*}
\left(F_g \right)_{T_Q^{\textrm{up}}}
&=\frac{1}{\left|T_Q^{\textrm{up}} \right|} \int_{T_Q^{\textrm{up}}} F_g(x, t) dxdt \\
&=\frac{1}{|Q|} \cdot \frac{2}{\ell(Q)} \int_Q \int_{\ell(Q)/2}^{\ell(Q)} g(x) dtdx \\
&=\frac{1}{|Q|} \int_Q g=g_Q.
\end{align*}
Hence,
\begin{align*}
M_{\calC, p}(F_g, Q)
&=\left(\frac{1}{\left|T_Q^{\textrm{up}} \right|} \int_{T_Q^{\textrm{up}}} \left|F_g- \left(F_g \right)_{T_Q^{\textrm{up}}}\right|^p \right)^{1/p} \\
&= \left(\frac{1}{|Q|} \cdot \frac{2}{\ell(Q)} \int_Q \int_{\ell(Q)/2}^{\ell(Q)} \left|g(x)- g_Q\right|^pdtdx \right)^{1/p} \\
&=\left(\intavg_Q |g-g_Q|^p dx \right)^{1/p} \\
& \ge \intavg_Q |g-g_Q| dx,
\end{align*}
where we used H\"older's inequailty in the last step; consequently,
$$
[g]_\varepsilon^{B} \le [F_g]_{\varepsilon, p}.
$$
Thus, if $F_g \in B_{\calC, 0}^p$, namely, $[F_g]_p=\overline{\lim\limits_{\varepsilon \to 0}} [F_g]_{\varepsilon, p}=0$, then $\overline{\lim\limits_{\varepsilon \to 0}} [g]_{\varepsilon}^B=0$, that is $g \in B_0$. The desired claim then follows from Theorem \ref{BBMMain}. 
\end{proof}

Lemma~\ref{20260608lem01} and Lemma~\ref{20260608lem02} capture the two extreme manifestations of the rigidity pheomenon in the Carleson setting: Lemma ~\ref{20260608lem01} states that if $f \in Q_0 \times (0, 1)$ takes constant value along the $x$-direction, then Theorem \ref{BBMMain} fails; while Lemma~\ref{20260608lem02} states that if $f \in Q_0 \times (0, 1)$ takes constant value along the $t$-direction, then Theorem \ref{BBMMain} has in fact a valid counterpart in the Carleson setting. In what follows, we provide a complete understanding of the occurrence of rigidity in Theorem \ref{BBMMain}, but now in the Carleson setting.

\subsection{The $B_{\calC}^p$-trace}

As already alluded above, Lemma~\ref{20260608lem01} and Lemma~\ref{20260608lem02} motivate the following perspective: to capture the essence of Theorem \ref{BBMMain} in the Carleson setting, one should pass to the limit as $t \to 0^+$ and examine whether the resulting limit, if it exists, satisfies the conclusion of Theorem \ref{BBMMain}. This leads us to introduce the following definition.

\begin{defn}
Let $1 \le p<+\infty$ and $f \in B_{\calC}^p$. We say that a measurable function $g$ defined on $Q_0$ is the \emph{$B_{\calC}^p$-trace of $f$}, and denote this by $\textnormal{Tr}_{\calC, p}(f)=g$, if
$$
\lim_{\varepsilon \to 0} \calA_{\varepsilon, p}(f, g)=0, 
$$
where
\begin{equation} \label{20260610eq30}
\calA_{\varepsilon, p}(f, g):=\sup_{\calF(\varepsilon)} \left\{ \varepsilon^{n-1} \sum_{Q \in \calF(\varepsilon)} \left( \intavg_{T_Q^{\textrm{up}}} |f-F_g|^p  \right)^{1/p} \right\}
\end{equation} 
and $F_g(x, t)=g(x)$ is defined as in Lemma \ref{20260608lem02}.
\end{defn}
\begin{rem}
Observe that, in the above definition of the $B_{\calC}^p$-trace, the quantity in \eqref{20260610eq30} is of BBM-bounded-average type; more precisely, it corresponds to the quantity defining the \emph{BBM-bounded average space} $\textrm{BBA}_{\calC}^p(Q_0)$; see \eqref{20260610eq33}. 

In particular, $\calA_{\varepsilon, p}(f, g)$ is \emph{not} defined through the original semi-norm $\|\cdot\|_{B_{\calC}^p}$.
The reason for this is that the $B_{\calC}^p$ semi-norm only measures oscillation modulo local constants, while a trace should measure the actual convergence of $f$ to a fixed boundary profile. Indeed, suppose that one would try to replace \eqref{20260610eq30} by the oscillation-type quantity
$$
\widetilde{\calA}_{\varepsilon,p}(f,g):=
\sup_{\calF(\varepsilon)}
\left\{\varepsilon^{n-1}
\sum_{Q\in\calF(\varepsilon)}
\left(\intavg_{T_Q^{\textrm{up}}}
\left|\left(f-f_{T_Q^{\textrm{up}}}\right)-
\left(F_g-(F_g)_{T_Q^{\textrm{up}}}\right)
\right|^p\,dV\right)^{1/p}\right\}.
$$
Then, the resulting notion of trace would be too weak and, in particular, would not give a well-defined trace map. For instance, taking $f=0$ and letting $g$ be any nonconstant smooth function on $Q_0$, since $F_g$ is smooth in the horizontal variables and independent of $t$, we would obtain
$$
\widetilde{\calA}_{\varepsilon,p}(0,g)
=\sup_{\calF(\varepsilon)}
\left\{\varepsilon^{n-1} \sum_{Q\in\calF(\varepsilon)}
M_{\calC,p}(F_g,Q)\right\}
\longrightarrow 0
\qquad \textrm{as } \varepsilon\to0.
$$
Thus, under this oscillation-type definition, such a nonconstant function $g$ would be regarded as a trace of the zero function $f=0$, which is absurd. Equivalently, the trace would fail to be unique.

The same obstruction remains even if one works in the quotient space $B_{\calC}^p/\mathbb R$. Namely, passing to the quotient space only removes the ambiguity caused by global constants, whereas the $B_{\calC}^p$ semi-norm removes local constants on each tent. Hence, the trace must be defined through the BBM-bounded-average type convergence in \eqref{20260610eq30}, rather than the $B_{\calC}^p$ oscillation semi-norm.
\end{rem}

Having defined the notion of $B_{\calC}^p$-trace, we show first that, when it exists, the trace is unique and is locally $p$-integrable.

\begin{lem} \label{20260610lem01}
Let \(1 \le p<+\infty\) and \(f \in B_{\calC}^p\). If \(\textnormal{Tr}_{\calC,p}(f)\) exists, then it is unique and belongs to $L^p(Q_0)$.
\end{lem}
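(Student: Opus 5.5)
The plan is to reduce both assertions to the identity already used in the proof of Lemma~\ref{20260608lem02}: for any measurable $h$ on $Q_0$ and any cube $Q\subseteq Q_0$,
$$
\intavg_{T_Q^{\textrm{up}}} |F_h|^p\,dV=\intavg_Q |h|^p\,dx,
$$
since $F_h$ does not depend on $t$. Combined with Minkowski's inequality on each tent, this turns every statement about $\calA_{\varepsilon,p}$ into a statement about averages of functions on $Q_0$.

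\emph{Uniqueness.} Suppose $g_1,g_2$ are both $B_{\calC}^p$-traces of $f$, and set $h:=|g_1-g_2|$. Since $F_{g_1}-F_{g_2}=(F_{g_1}-f)+(f-F_{g_2})$, Minkowski's inequality on each $T_Q^{\textrm{up}}$, summed and then supremized over $\calF(\varepsilon)$, gives
$$
\sup_{\calF(\varepsilon)}\Big\{\varepsilon^{n-1}\sum_{Q\in\calF(\varepsilon)}\Big(\intavg_Q h^p\Big)^{1/p}\Big\}\le \calA_{\varepsilon,p}(f,g_1)+\calA_{\varepsilon,p}(f,g_2)\longrightarrow 0 .
$$
Assume, for contradiction, that $E:=\{h>\delta\}$ has positive measure for some $\delta>0$.
\begin{enumerate}
\item[(1)] Restrict to $\varepsilon=2^{-k}$ and the dyadic grid $\calD_k(Q_0)$. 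By the Lebesgue density theorem (equivalently, dyadic martingale convergence of $\one_E$), the cubes $Q\in\calD_k(Q_0)$ with $|Q\cap E|\ge |Q|/2$ have total measure at least $|E|/2$ once $k$ is large.
\item[(2)] There are therefore at least $\tfrac{|E|}{2}\,2^{kn}$ such cubes. This exceeds $\lfloor 2^{k(n-1)}\rfloor=\lfloor\varepsilon^{1-n}\rfloor$ as soon as $2^{-k}\le |E|/2$; the case $n=1$ needs only a single cube.
\item[(3)] Choose $\lfloor \varepsilon^{1-n}\rfloor$ of these cubes. They are disjoint and form an admissible $\calF(\varepsilon)$, and each satisfies $\intavg_Q h^p\ge \delta^p/2$.
\item[(4)] The left-hand side above is then at least $\varepsilon^{n-1}\lfloor\varepsilon^{1-n}\rfloor\,\delta\,2^{-1/p}\ge \delta\,2^{-1-1/p}$ for every large $k$. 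This contradicts convergence to $0$, so $g_1=g_2$ a.e.
\end{enumerate}

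\emph{Integrability.} Fix $\varepsilon=2^{-k}$ small enough that $\calA_{\varepsilon,p}(f,g)\le 1$.
\begin{enumerate}
\item[(1)] Any single cube $Q\in\calD_k(Q_0)$ forms an admissible family, so $\big(\intavg_{T_Q^{\textrm{up}}}|f-F_g|^p\big)^{1/p}\le \varepsilon^{1-n}$.
\item[(2)] Since $f\in L^p(Q_0\times(0,1))$, the quantity $\big(\intavg_{T_Q^{\textrm{up}}}|f|^p\big)^{1/p}$ is finite.
\item[(3)] By the identity above and Minkowski's inequality,
$$
\Big(\intavg_Q|g|^p\Big)^{1/p}=\Big(\intavg_{T_Q^{\textrm{up}}}|F_g|^p\Big)^{1/p}\le \varepsilon^{1-n}+\Big(\intavg_{T_Q^{\textrm{up}}}|f|^p\Big)^{1/p}<\infty .
\end{enumerate}
Summing $\int_Q|g|^p$ over the finitely many cubes of $\calD_k(Q_0)$, which tile $Q_0$, gives $g\in L^p(Q_0)$.

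The main obstacle is the counting constraint $\#\calF(\varepsilon)\le\varepsilon^{1-n}$. Because of it, a positive-measure discrepancy cannot simply be summed over a full grid of $\varepsilon^{-n}$ cubes; one has to select $\varepsilon^{1-n}$ cubes on each of which the discrepancy is uniformly large. The density argument in steps (1)--(3) of the uniqueness part supplies exactly this selection, and there is ample room because $|E|\,\varepsilon^{-n}\gg\varepsilon^{1-n}$ for small $\varepsilon$.
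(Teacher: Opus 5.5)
Your proof is correct and follows essentially the same route as the paper. For uniqueness, Minkowski on each tent reduces the problem to showing that $\Lambda_{\varepsilon,p}(g_1-g_2)\to 0$ forces $g_1=g_2$ a.e., which you settle by a density argument producing about $\varepsilon^{1-n}$ disjoint $\varepsilon$-cubes on which $|g_1-g_2|>\delta$ on at least half of each cube. For integrability, you fix one scale where $\calA_{\varepsilon,p}(f,g)$ is bounded and combine $f\in L^p$ with Minkowski on the tents.

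Your dyadic selection keeps exactly $\lfloor\varepsilon^{1-n}\rfloor$ cubes, so the family is genuinely admissible, and this makes the paper's ``Vitali covering type argument'' explicit. Your single-cube families are a slightly simpler substitute for the paper's layered families $\calF_j(\varepsilon_0)$.
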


\begin{proof}
We first show that if $\textnormal{Tr}_{\calC, p}(f)$ exists, then it is unique. For any measurable function \(u\) on \(Q_0\), define
\[
\Lambda_{\varepsilon,p}(u)
:=
\sup_{\calF(\varepsilon)}
\left\{
\varepsilon^{n-1}
\sum_{Q \in \calF(\varepsilon)}
\left( \intavg_Q |u(x)|^p\,dx \right)^{1/p}
\right\}.
\]
Assume that both \(g_1\) and \(g_2\) are \(B_{\calC}^p\)-traces of \(f\). Then, by the triangle inequality,
\[
\Lambda_{\varepsilon,p}(g_1-g_2)
\le
\calA_{\varepsilon,p}(f,g_1)+\calA_{\varepsilon,p}(f,g_2).
\]
By assumption, the right-hand side converges to zero as \(\varepsilon \to 0\). Hence
\begin{equation} \label{20260609eq10}
\lim_{\varepsilon \to 0}\Lambda_{\varepsilon,p}(g_1-g_2)=0.
\end{equation}
Denote \(u:=g_1-g_2\). It remains to show that \eqref{20260609eq10} implies
\begin{equation} \label{20260609eq11}
u=0 \quad \text{a.e. on } Q_0.
\end{equation}
Assume the contrary. Then there exists some \(\delta>0\) such that \(|E_\delta|>0\), where
\[
E_\delta:=\left\{x \in Q_0:\ |u(x)|>\delta \right\}.
\]
By the Lebesgue density theorem and using a Vitali covering type argument, for all sufficiently small \(\varepsilon>0\), one can find a collection of \(\varepsilon\)-cubes $\mathcal Q:=\{Q_j\}$
with \(\#\mathcal Q\gtrsim \varepsilon^{1-n}\), such that for every \(Q_j\in\mathcal Q\) we have
\[
|E_\delta \cap Q_j| \ge \frac{1}{2}|Q_j|.
\]
Consequently, for each \(Q_j\in\mathcal Q\), we obtain
\[
\left(\intavg_{Q_j} |u(x)|^p\,dx \right)^{1/p}
\ge
\frac{\delta}{2^{1/p}}.
\]
Therefore,
\[
\Lambda_{\varepsilon,p}(u)
\ge
\varepsilon^{n-1}
\sum_{Q_j \in \mathcal Q}
\left(\intavg_{Q_j} |u(x)|^p\,dx \right)^{1/p}
\gtrsim \delta,
\]
which contradicts \eqref{20260609eq10}. Thus \(u=0\) a.e. on \(Q_0\), and the uniqueness of the $B_{\calC}^p$-trace is proved. 

\vspace{0.1cm}

Next, assume $g=\operatorname{Tr}_{\mathcal C,p}(f)$ exists. We aim to show that
$g\in L^p(Q_0)$. Indeed, let $\varepsilon_0>0$ (fixed) be such that
\begin{equation} \label{20260610eq02}
\mathcal A_{\varepsilon_0,p}(f,g)<1.
\end{equation} 
Without loss of generality, let us further assume that $\varepsilon_0^{-1}\in\mathbb N$. Divide
$Q_0=[0,1)^n$ into the standard family of $\varepsilon_0$-cubes. The
total number of such cubes is $\varepsilon_0^{-n}$. We then split this
family into $\varepsilon_0^{-1}$ layers
$\mathcal F_j(\varepsilon_0)$, $j=1,\ldots,\varepsilon_0^{-1}$, by grouping together those $\varepsilon_0$-cubes with the same position in one fixed coordinate direction. In this way, each $\mathcal F_j(\varepsilon_0)$
consists of mutually disjoint $\varepsilon_0$-cubes and $\#\mathcal F_j(\varepsilon_0)=\varepsilon_0^{1-n}$. Using these collections with \eqref{20260610eq02} yields
$$
\int_{Q_0 \times (\varepsilon_0/2, \varepsilon_0)} |f-F_g|^p \lesssim_{\varepsilon_0} 1.
$$
This together with the assumption that $f \in L^p(Q_0 \times (0, 1))$ gives $\int_{Q_0 \times (\varepsilon_0/2, \varepsilon_0)} |F_g|^p \lesssim_{\varepsilon_0} 1$, which implies $g \in L^p(Q_0)$.
\end{proof}

Next, we address the issue of existence of $B_{\calC}^p$-trace. The following example may be viewed as a variant of the simple example considered in Lemma \ref{20260608lem01}.

\begin{lem} \label{20260610lem02}
Let $1 \le p<+\infty$. There exists a function $f \in B_{\calC,0}^p$ such that $\textnormal{Tr}_{\calC,p}(f)$ does not exist.
\end{lem}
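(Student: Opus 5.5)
We will construct $f$ depending only on $t$ and oscillating in $t$ in a way that is invisible at each fixed scale but has no limit as $t\to0^+$. Concretely, set
$$
f(x,t):=h(t), \qquad h(t):=\sum_{k\ge 0} (-1)^k\, \one_{[2^{-k-1},\,2^{-k})}(t),
$$
so $h$ alternates between $1$ and $-1$ on consecutive dyadic layers. Then $f$ is bounded, hence in $L^p(Q_0\times(0,1))$.

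\emph{Step 1: $f\in B_{\calC,0}^p$.} Since $f$ depends only on $t$, $M_{\calC,p}(f,Q)$ depends only on $\ell(Q)=\varepsilon$. If $\varepsilon=2^{-k}$ is dyadic, the tent $T_Q^{up}$ lies in a single layer, so $M_{\calC,p}(f,Q)=0$. For non-dyadic $\varepsilon$ this fails, since $[\varepsilon/2,\varepsilon)$ meets two layers and $M_{\calC,p}(f,Q)\simeq 1$. So this first choice is too crude, and we modify it to make the oscillation in $t$ slow.

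We replace $h$ by a function that oscillates slowly on a logarithmic scale, for instance
$$
h(t)=\sin\bigl(\log\log(e/t)\bigr)
$$
(or a smooth interpolation of $\pm1$ plateaus whose lengths in $\log(1/t)$ grow). Then $|h'(t)|\lesssim \bigl(t\log(e/t)\bigr)^{-1}$. On $[\varepsilon/2,\varepsilon)$ the oscillation of $h$ is therefore at most $\varepsilon\cdot\sup|h'|\lesssim 1/\log(e/\varepsilon)\to0$. Hence
$$
M_{\calC,p}(f,Q)\le O_{\calC}(f,Q)\lesssim \frac{1}{\log(e/\varepsilon)}.
$$
Combined with $\#\calF(\varepsilon)\le\varepsilon^{1-n}$, this gives
$$
[f]_{\varepsilon,p}\lesssim \frac{1}{\log(e/\varepsilon)}\to0,
$$
and the same bound shows $\|f\|_{B_{\calC}^p}<\infty$. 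So $f\in B^p_{\calC,0}$.

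\emph{Step 2: no trace exists.} Suppose $g=\textnormal{Tr}_{\calC,p}(f)$. Take $\varepsilon$ with $\varepsilon^{-1}\in\N$ and split the standard grid of $\varepsilon$-cubes into $\varepsilon^{-1}$ layers $\calF_j(\varepsilon)$, as in the proof of Lemma~\ref{20260610lem01}. Summing over layers and using $\calA_{\varepsilon,p}(f,g)\to0$ (together with Hölder or Minkowski to pass from the sum of $L^p$ averages to an $L^1$-type average), we obtain
$$
\intavg_{Q_0\times[\varepsilon/2,\varepsilon)}|f-F_g|\to 0 .
$$
Since $h$ varies by $o(1)$ on $[\varepsilon/2,\varepsilon)$, this says
$$
\int_{Q_0}|g(x)-h(\varepsilon)|\,dx\to0
$$
along such $\varepsilon$. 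This forces $h(\varepsilon)$ to converge to a constant $c$, with $g\equiv c$. But $h(\varepsilon)=\sin(\log\log(e/\varepsilon))$ takes both values $\pm1$ along sequences $\varepsilon\to0$ with $\varepsilon^{-1}\in\N$: by continuity and slow variation, the reciprocals of integers come within $o(1)$ of every value in $[-1,1]$. This is a contradiction.

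The main obstacle is Step 1. A naive oscillating-in-$t$ example, such as indicator layers, gives oscillation of order one on non-dyadic tents. The plan is to make the oscillation slow enough that each tent sees $o(1)$ variation, yet unbounded in $\log\log$ scale so that no limit exists; the $\log\log$ choice is intended to do exactly this. Step 2 only needs care in converting the sup over families $\calF(\varepsilon)$ into a full-layer average, and the layer decomposition from Lemma~\ref{20260610lem01} handles that.
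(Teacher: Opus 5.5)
Your proposal is correct. After you discard the dyadic-layer attempt, your construction is exactly the paper's example $h(t)=\sin(\log\log(e/t))$, and your Step 1 is the paper's argument: the oscillation of $h$ on $[\varepsilon/2,\varepsilon]$ is $\lesssim 1/\log(e/\varepsilon)$, so $[f]_{\varepsilon,p}\to 0$.

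Where you differ is the proof that no trace exists.

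\emph{The paper's route.} It chooses $\varepsilon_k,\eta_k\to0$ with $h(\varepsilon_k)=1$ and $h(\eta_k)=-1$ exactly. It bounds $\Lambda_{\varepsilon_k,p}(g-1)\le\calA_{\varepsilon_k,p}(f,g)+\operatorname{osc}_{[\varepsilon_k/2,\varepsilon_k]}h$. It then reuses the Lebesgue-density/Vitali covering argument from the uniqueness part of Lemma~\ref{20260610lem01}, which works at arbitrary scales, to get $g=1$ and $g=-1$ a.e.

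\emph{Your route.} You restrict to $\varepsilon=1/m$ and tile $Q_0$ by $m$ admissible layers, as in the $L^p$-part of that lemma. This gives the clean bound $\intavg_{Q_0\times[\varepsilon/2,\varepsilon)}|f-F_g|\,dV\le\calA_{\varepsilon,p}(f,g)$, with constant exactly $1$. Hence $\|g-h(1/m)\|_{L^1(Q_0)}\to0$, and $h(1/m)$ would have to be Cauchy.

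\emph{What each buys.} Your route avoids the covering argument, which the paper only sketches. The cost is one extra fact: $h$ still fails to converge along reciprocals of integers. This holds because $|h(1/m)-h(1/(m+1))|\le \frac{1}{m(1+\log m)}\to0$ while $\log\log(em)\to\infty$, so both $\pm1$ are limit points of $h(1/m)$. Note that $h(1/m)$ need not literally take the values $\pm1$, as you first wrote. Your ``within $o(1)$'' formulation is the correct one, and it suffices.
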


\begin{proof}
Consider
\begin{equation} \label{20260610eq11}
f(x,t)=h(t),\qquad \text{where} \qquad h(t)=\sin\left(\log\log \frac{e}{t}\right),\quad 0<t<1.
\end{equation} 
Since $h$ is bounded, it follows immediately that $f\in B_{\calC}^p$. In fact, we have that $f\in B_{\calC,0}^p$. Indeed, for each $\varepsilon$-cube $Q$, one has
\begin{align} \label{20260610eq01}
M_{\calC,p}(f,Q)
&=\left(
\frac{2}{\varepsilon}\int_{\varepsilon/2}^{\varepsilon}
\left|h(t)-\frac{2}{\varepsilon}\int_{\varepsilon/2}^{\varepsilon}h(\widetilde t)\,d\widetilde t\right|^p dt
\right)^{1/p} \nonumber \\
&\le \operatorname{osc}_{t\in[\varepsilon/2,\varepsilon]} h(t)
\le \int_{\varepsilon/2}^{\varepsilon}|h'(t)|\,dt \nonumber \\
&\le \int_{\varepsilon/2}^{\varepsilon}
\frac{1}{t\log(e/t)}\,dt
=\log\left(
\frac{\log(2e/\varepsilon)}{\log(e/\varepsilon)}
\right)
\longrightarrow 0,
\qquad \text{as } \varepsilon\to0.
\end{align}
Therefore,
$$
\lim_{\varepsilon\to0}\,[f]_{\varepsilon,p}
=\lim_{\varepsilon\to0}\sup_{\calF(\varepsilon)}
\left\{\varepsilon^{n-1}\sum_{Q\in\calF(\varepsilon)}
M_{\calC,p}(f,Q)\right\}=0,
$$
which proves that $f\in B_{\calC,0}^p$.

\vspace{0.1cm}

Next, we show that $\textnormal{Tr}_{\calC,p}(f)$ does not exist. Assume the contrary, that is, 
$$
\textnormal{Tr}_{\calC,p}(f)=g
$$
for some measurable function $g$ on $Q_0$. Since $h$ oscillates infinitely many times near $t=0$, we may choose two sequences $\{\varepsilon_k\}_{k\ge1}$ and $\{\eta_k\}_{k\ge1}$ such that
$$
\lim_{k\to\infty}\varepsilon_k=\lim_{k\to\infty}\eta_k=0,
$$
and
$$
h(\varepsilon_k)=1,\qquad h(\eta_k)=-1,\qquad k\ge1.
$$
Then, by the triangle inequality, for each $k\ge1$, we can write
\begin{align*}
\Lambda_{\varepsilon_k,p}(g-1)
&=\Lambda_{\varepsilon_k,p}(g-h(\varepsilon_k)) \\
&\le \calA_{\varepsilon_k,p}(f,g)+\operatorname{osc}_{t\in[\varepsilon_k/2,\varepsilon_k]}h(t).
\end{align*}
Letting $k\to\infty$, and using both the assumption $g=\textnormal{Tr}_{\calC,p}(f)$ and estimate \eqref{20260610eq01}, we obtain
$$
\lim_{k\to\infty}\Lambda_{\varepsilon_k,p}(g-1)=0.
$$
The same argument used to prove \eqref{20260609eq11} in the previous lemma also applies along the sequence $\{\varepsilon_k\}_{k\geq 1}$. Hence, we readily get 
$$
g=1 \quad \text{a.e. on } Q_0.
$$
On the other hand, applying the same reasoning to the sequence $\{\eta_k\}_{k\ge1}$, we get
$$
\lim_{k\to\infty}\Lambda_{\eta_k,p}(g+1)=0,
$$
and hence
$$
g=-1 \quad \text{a.e. on } Q_0.
$$
This is a contradiction. Therefore, $\textnormal{Tr}_{\calC,p}(f)$ does not exist.
\end{proof}

Lemma~\ref{20260610lem01} and Lemma~\ref{20260610lem02} motivate us to introduce the following subspace of $B_{\calC}^p$. 

\begin{defn}
Let $1 \le p<+\infty$. Denote the \emph{traceable subspace} of $B_{\calC}^p$ by
$$
T_{\calC}^p:=\left\{f \in B_{\calC}^p: \exists \; g \in L^p(Q_0) \ \textrm{such that} \ \textnormal{Tr}_{\calC, p}(f)=g \right\}. 
$$
\end{defn}

\noindent In particular, Lemma \ref{20260610lem02} shows that
$$
T_{\calC}^p \subsetneq B_{\calC}^p, \qquad \textrm{and} \qquad  B_{\calC, 0}^p \not\subseteq T_{\calC}^p.
$$
Moreover, Lemma \ref{20260610lem01} asserts that the trace map
$$
\textnormal{Tr}_{\calC,p}:T_{\calC}^p\to L^p(Q_0)
$$
is well-defined.

\subsection{Properties of the traceable subspace $T_{\calC}^p$}

In this section, we present several structural results for the function space $T_{\calC}^p$. First, motivated by Lemma \ref{20260610lem02}, it is natural to ask whether $T_{\calC}^p$ is contained in $B_{\calC, 0}^p$. We have the following.

\begin{lem} \label{20260610eq40}
For any $1 \le p<+\infty$, we have
$
T_{\calC}^p \not\subseteq B_{\calC,0}^p.
$
\end{lem}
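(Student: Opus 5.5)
The plan is to take $f=F_g$ with $g$ a bounded $\Z$-valued function that has a jump across a hyperplane. Such an $f$ is traceable for trivial reasons, since its trace is $g$ itself. At the same time, the jump makes the BBM--Carleson oscillation fail to vanish at small scales. Concretely, I would set
$$
g(x):=\one_{\{x_1<1/2\}}, \qquad x=(x_1,\dots,x_n)\in Q_0, \qquad f:=F_g,\quad F_g(x,t)=g(x).
$$

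\emph{Step 1: $f\in T_{\calC}^p$.} Since $0\le g\le 1$, we have $f\in L^p(Q_0\times(0,1))$. Moreover, for every cube $Q$ we have $M_{\calC,p}(f,Q)\le O_{\calC}(f,Q)\le 1$. For any admissible family $\calF(\varepsilon)$, the condition $\#\calF(\varepsilon)\le\varepsilon^{1-n}$ then gives $[f]_{\varepsilon,p}\le 1$, so $f\in B_{\calC}^p$. (Alternatively, this follows from Proposition~\ref{mainprop1} because $f\in \textnormal{BBO}_{\calC}$.) Since $f-F_g\equiv 0$, we get $\calA_{\varepsilon,p}(f,g)=0$ for every $\varepsilon$, so $\textnormal{Tr}_{\calC,p}(f)=g\in L^p(Q_0)$.

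\emph{Step 2: $f\notin B_{\calC,0}^p$.} The computation in the proof of Lemma~\ref{20260608lem02} shows that $M_{\calC,p}(F_g,Q)=\left(\intavg_Q|g-g_Q|^p\right)^{1/p}$. Take $\varepsilon=1/m$ with $m\in\N$ large. Consider the cubes
$$
Q=\left[\tfrac12-\tfrac{\varepsilon}{2},\tfrac12+\tfrac{\varepsilon}{2}\right)\times \prod_{i=2}^n\left[k_i\varepsilon,(k_i+1)\varepsilon\right), \qquad 0\le k_i\le m-1.
$$
These cubes lie in $Q_0$, are mutually disjoint, and there are exactly $\varepsilon^{1-n}$ of them, so they form an admissible $\calF(\varepsilon)$. (When $n=1$ this is a single cube.) On each such $Q$ we have $g_Q=1/2$ and $|g-g_Q|\equiv 1/2$, hence $M_{\calC,p}(f,Q)=1/2$. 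Therefore
$$
[f]_{1/m,p}\ \ge\ \varepsilon^{n-1}\cdot \varepsilon^{1-n}\cdot\tfrac12=\tfrac12 \qquad \textrm{for all } m,
$$
and so $[f]_p=\overline{\lim}_{\varepsilon\to0}[f]_{\varepsilon,p}\ge 1/2>0$.

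There is no real obstacle here. The only point needing care is choosing the test family so that it is admissible (disjoint, inside $Q_0$, with at most $\varepsilon^{1-n}$ cubes) while every cube straddles the jump symmetrically. Restricting to $\varepsilon=1/m$ handles this. It also suffices, because $B_{\calC,0}^p$ is defined through the upper limit, so a sequence $\varepsilon\to 0$ is enough. Alternatively, $f\notin B_{\calC,0}^p$ follows from Lemma~\ref{20260608lem02}: $g$ is $\Z$-valued and nonconstant, so $F_g\in B_{\calC,0}^p$ would force $g$ to be constant, a contradiction.
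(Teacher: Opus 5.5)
Your proposal is correct and matches the paper's proof essentially step for step: the same $g=\one_{\{x_1<1/2\}}$, traceability from $\calA_{\varepsilon,p}(F_g,g)=0$, and a family of about $\varepsilon^{1-n}$ disjoint cubes bisected by $\{x_1=1/2\}$, each with $M_{\calC,p}(F_g,Q)=1/2$. Your explicit check that $F_g\in B_{\calC}^p$ is a useful addition, since the paper leaves it implicit even though the definition of trace requires it, and so is your concrete admissible family at $\varepsilon=1/m$.
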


\begin{proof}
Let
$$
E:=\left\{x\in Q_0:\ 0\le x_1<1/2\right\}, \qquad g(x):=\one_E(x),
$$
and let $F_g$ be the extension of $g$ as usual. Since $\calA_{\varepsilon,p}(F_g,g)=0$ for every $0<\varepsilon \le 1$, it follows immediately from the definition of trace that $F_g\in T_{\calC}^p$ and $ \textnormal{Tr}_{\calC,p}(F_g)=g$.

It remains to show that $F_g\notin B_{\calC,0}^p$. For every $\varepsilon>0$ (sufficiently small), one can find a collection $\calQ_\varepsilon$ of mutually disjoint $\varepsilon$-cubes in $Q_0$, with
$
\#\calQ_\varepsilon \gtrsim \varepsilon^{1-n},
$
such that the hyperplane $\{x_1=1/2\}$ divides each $Q\in\calQ_\varepsilon$ into two parts of equal volume; see, Figure \ref{Fig1}. 

\begin{figure}[htbp]
\centering
\begin{tikzpicture}[scale=4.8, every node/.style={font=\small}]

\def\eps{0.13}

\draw[thick] (0,0) rectangle (1,1);

\draw[->, thick] (0,0) -- (1.2,0);
\draw[->, thick] (0,0) -- (0,1.2);

\draw[very thick, dashed] (0.5,0) -- (0.5,1);
\node[above] at (0.5,1.04) {$\{x_1=1/2\}$};

\foreach \k in {0,...,5} {
    \pgfmathsetmacro{\y}{0.08+\k*0.145}
    \fill[gray!20] (0.5-\eps/2,\y) rectangle (0.5+\eps/2,\y+\eps);
    \draw[thick] (0.5-\eps/2,\y) rectangle (0.5+\eps/2,\y+\eps);
    \draw[densely dotted] (0.5,\y) -- (0.5,\y+\eps);
}

\draw[<->] (0.5-\eps/2-0.035,0.08) -- (0.5-\eps/2-0.035,0.08+\eps);
\node[left, font=\scriptsize] at (0.5-\eps/2-0.04,0.08+0.5*\eps) {$\varepsilon$};

\node[below left] at (0,0) {$0$};
\node[below] at (0.5,0) {$1/2$};
\node[below] at (1,0) {$1$};
\node[left] at (0,1) {$1$};

\node[below] at (1.16,0) {$x_1$};
\node[left] at (0,1.16) {$(x_2,\dots,x_n)$};

\node[anchor=north west] at (0.02,0.98) {$Q_0$};
\node[right] at (0.62,0.52) {$Q\in\mathcal Q_\varepsilon$};

\end{tikzpicture}
\caption{A family $\mathcal Q_\varepsilon$ of mutually disjoint $\varepsilon$-cubes in $Q_0$, each bisected by the hyperplane $\{x_1=1/2\}$.}
\label{Fig1}
\end{figure}

Now, for each such cube $Q$, since $F_g$ is independent of $t$, its average over $T_Q^{up}$ is equal to $1/2$. Hence,
$$
M_{\calC,p}(F_g,Q)
=
\left(\intavg_{T_Q^{up}}\left|F_g-\frac12\right|^p\,dV\right)^{1/p}
=
\frac12.
$$
Therefore,
$$
[F_g]_{\varepsilon,p}
\ge
\varepsilon^{n-1}
\sum_{Q\in\calQ_\varepsilon}
M_{\calC,p}(F_g,Q)
\gtrsim
\varepsilon^{n-1}\varepsilon^{1-n}
\gtrsim 1.
$$
Consequently,
$$
\liminf_{\varepsilon\to0}\,[F_g]_{\varepsilon,p}>0,
$$
which implies $F_g\notin B_{\calC,0}^p$.
\end{proof}

A natural next question is to ask whether $T_{\calC}^p$ is closed under the topology induced by the space $B_{\calC}^p$. However, since $\| \cdot \|_{B_{\calC}^p}$ defines only a semi-norm on $B_{\calC}^p$, the more meaningful question to ask is about the closure in the equivalence classes. More precisely, consider the quotient spaces
$$
\dot{B}_{\calC}^p:=B_{\calC}^p /\R, \qquad \textnormal{and} \qquad \dot{T}_{\calC}^p:=T_{\calC}^p /\R.
$$
Moreover, in this case, we interpret the trace map acting on these quotient spaces, namely
$$
\textnormal{Tr}_{\calC, p}: \dot{T}_{\calC}^p \longrightarrow \dot{L}^p(Q_0):=L^p(Q_0)/\R, \qquad [f] \longrightarrow \left[\textnormal{Tr}_{\calC, p}(f) \right ]
$$
Observe that, by Lemma \ref{20260610lem01}, the above mapping is also well-defined. 

\vspace{0.1cm}

We have the following result.

\begin{lem} \label{20260610lem65}
For any $1 \le p<+\infty$, $\dot{T}_{\calC}^p$ is not closed in the norm topology induced by $\|\cdot \|_{B_{\calC}^p}$ on $\dot{B}_{\calC}^p$.
\end{lem}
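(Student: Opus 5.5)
We will exhibit a sequence $\{f_k\}\subseteq T_{\calC}^p$ converging in the $B_{\calC}^p$ seminorm to a function $f\in B_{\calC}^p$ whose class $[f]$ is not in $\dot{T}_{\calC}^p$. The natural candidate for the limit is the function from Lemma~\ref{20260610lem02},
$$
f(x,t)=h(t),\qquad h(t)=\sin\left(\log\log\frac{e}{t}\right).
$$
That lemma shows $\textnormal{Tr}_{\calC,p}(f)$ does not exist. Since adding a constant $c$ to $f$ adds $c$ to any trace, $f+c$ also has no trace for every $c\in\R$. Hence $[f]\notin\dot T_{\calC}^p$.

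For the approximants, we truncate the oscillation near the boundary. Put $f_k(x,t):=h(\max\{t,1/k\})$, so that $f_k$ agrees with $f$ for $t\ge 1/k$ and equals the constant $h(1/k)$ for $t<1/k$. Then $\textnormal{Tr}_{\calC,p}(f_k)=h(1/k)$. Indeed, for $\varepsilon<1/k$ every upper tent $T_Q^{up}$ with $\ell(Q)=\varepsilon$ lies in $\{t<1/k\}$, where $f_k-F_{h(1/k)}\equiv 0$, so $\calA_{\varepsilon,p}(f_k,h(1/k))=0$. Also $f_k$ is bounded, so $f_k\in B_{\calC}^p$ by the BBA inclusion (Proposition~\ref{mainprop1}). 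Thus $f_k\in T_{\calC}^p$.

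The key step is to show that $\|f-f_k\|_{B_{\calC}^p}\to0$. The difference $d_k:=f-f_k$ depends only on $t$, vanishes for $t\ge1/k$, and equals $h(t)-h(1/k)$ for $t<1/k$. Fix $\varepsilon$ and a family $\calF(\varepsilon)$. If $\varepsilon/2\ge 1/k$, every term $M_{\calC,p}(d_k,Q)$ is zero. Otherwise $\varepsilon<2/k$. Because $d_k$ depends only on $t$, each term satisfies
$$
M_{\calC,p}(d_k,Q)\le \operatorname{osc}_{[\varepsilon/2,\varepsilon]} d_k\le \int_{\varepsilon/2}^{\varepsilon}|h'(t)|\,dt=\log\frac{\log(2e/\varepsilon)}{\log(e/\varepsilon)},
$$
and replacing $h$ by the truncated $h$ only decreases the oscillation. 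Since $\varepsilon^{n-1}\#\calF(\varepsilon)\le1$, we get
$$
[d_k]_{\varepsilon,p}\le \sup_{0<\varepsilon<2/k}\log\frac{\log(2e/\varepsilon)}{\log(e/\varepsilon)}=:\omega(2/k).
$$
This bound tends to $0$ as $k\to\infty$ by the computation \eqref{20260610eq01}, using that the bound is monotone in $\varepsilon$, which we should check briefly. Therefore $\|f-f_k\|_{B_{\calC}^p}\le\omega(2/k)\to0$, so $[f_k]\to[f]$ in $\dot B_{\calC}^p$. Combined with $[f]\notin\dot T_{\calC}^p$, this shows $\dot T_{\calC}^p$ is not closed.

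The main obstacle is the uniform smallness of $[f-f_k]_{\varepsilon,p}$ over all $\varepsilon$. This is handled by the slow $\log\log$ oscillation of $h$: the oscillation over each dyadic $t$-scale tends to zero. That is exactly the mechanism of Lemma~\ref{20260610lem02}, so little new work is needed beyond verifying the truncation does not increase oscillation.
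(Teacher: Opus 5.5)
Your proposal is correct and follows essentially the same route as the paper. It uses the same limit $f(x,t)=h(t)$ with $h(t)=\sin(\log\log(e/t))$, the same truncation $f_k=h(\max\{t,1/k\})$ with constant trace $h(1/k)$, and the same oscillation bound from \eqref{20260610eq01} to get $\|f-f_k\|_{B_{\calC}^p}\to0$. Your two refinements only make explicit what the paper leaves implicit: the monotonicity of $\log\frac{\log(2e/\varepsilon)}{\log(e/\varepsilon)}$ in $\varepsilon$, which gives the uniform bound $\omega(2/k)$, and the translation identity $\calA_{\varepsilon,p}(f+c,g)=\calA_{\varepsilon,p}(f,g-c)$, which shows $f+c\notin T_{\calC}^p$.
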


\begin{proof}
We show that the result is a consequence of the construction in Lemma \ref{20260610lem02}. Recall that in \eqref{20260610eq11} we have constructed the function
$$
f(x, t)=h(t) \qquad \textrm{with} \qquad h(t)=\sin \left(\log \log \frac{e}{t} \right), \quad 0<t<1,
$$
and we have showed that $f \in B_{\calC, 0}^p \backslash T_{\calC}^p$. It is clear that the same argument used to prove Lemma \ref{20260610lem02} also guarantees that $f+c \notin T_{\calC}^p$ for any $c \in \R$, and hence $[f] \notin \dot{T}_{\calC}^p$.

\vspace{0.1cm}

Next, we shall construct a sequence $\{[f_m]\}_{m \ge 1} \subseteq \dot{T}_{\calC}^p$ such that 
\begin{equation} \label{20260619eq30}
[f_m] \to [f]  \qquad \textrm{as} \qquad m \to \infty,
\end{equation} 
in the sense of the norm topology on $\dot{B}_{\calC}^p$. Indeed, for each $m \ge 2$, define 
$$
h_m(t):=
\begin{cases}
h \left(\frac{1}{m} \right), \qquad \hfill 0<t<\frac{1}{m}; \\
\\
h(t), \qquad \hfill \frac{1}{m} \le t<1,
\end{cases}
$$
and let $f_m(x, t):=h_m(t)$ be its extension. It is immediate that, for each $m \ge 2$, $f_m \in T_{\calC}^p$ with $\textnormal{Tr}_{\calC, p}(f_m)=h(1/m)$. This gives, for each $m \ge 1$, $\textnormal{Tr}_{\calC, p} \left([f_m] \right)=[0]$, and hence $\{[f_m]\}_{m \ge 1} \subseteq \dot{T}_{\calC}^p$. We are left to prove \eqref{20260619eq30}. Set
$$
r_m:=f_m-f.
$$
Then
$$
r_m(x,t):=
\begin{cases}
h\left(\frac{1}{m} \right)-h(t), \qquad \hfill 0<t<\frac{1}{m}; \\
\\
0, \qquad \hfill \frac{1}{m}\le t<1.
\end{cases}
$$
We claim that
$$
\|[f_m]-[f]\|_{B_{\calC}^p}
=
\sup_{0<\varepsilon \le 1}[r_m]_{\varepsilon,p}
\to 0 \qquad \textrm{as} \qquad m \to \infty.
$$
Fix $0<\varepsilon \le 1$ and let $Q$ be any $\varepsilon$-cube. Since $r_m$ only depends on $t$, one has
$$
M_{\calC,p}(r_m,Q)
\le \textnormal{osc}_{t\in[\varepsilon/2,\varepsilon]}r_m(t).
$$
We split the analysis into three cases. 

\medskip 

\noindent $\bullet$ If $\varepsilon\ge 2/m$, then $[\varepsilon/2,\varepsilon]\subset [1/m,1)$, and hence $r_m\equiv0$ on $T_Q^{\textrm{up}}$. Thus, 
$$
M_{\calC,p}(r_m,Q)=0.
$$

\medskip

\noindent $\bullet$ If $1/m\le \varepsilon<2/m$, then the only possible variation of $r_m$ occurs on $[\varepsilon/2,1/m]\subset[1/(2m),1/m]$. Therefore,
$$
M_{\calC,p}(r_m,Q) \le
\textnormal{osc}_{t\in[1/(2m),1/m]}h(t).
$$

\medskip 

\noindent $\bullet$ If $0<\varepsilon<1/m$, then $[\varepsilon/2,\varepsilon]\subset(0,1/m)$, and hence
$$
M_{\calC,p}(r_m,Q)
\le
\operatorname{osc}_{t\in[\varepsilon/2,\varepsilon]}h(t).
$$

\medskip 

Combining the three cases above with the argument in \eqref{20260610eq01}, we obtain
$$
\sup_{0<\varepsilon \le 1}\sup_{\ell(Q)=\varepsilon}
M_{\calC,p}(r_m,Q)
\to 0, \qquad \textrm{as} \qquad m \to \infty, 
$$
which gives
$$
[f_m]\to[f]
\qquad \textrm{in } \dot{B}_{\calC}^p.
$$
The proof of \eqref{20260619eq30} is complete. 
\end{proof}

Our next result shows that although neither of the inclusions
$$
B_{\calC,0}^p\subset T_{\calC}^p
\qquad \textrm{nor} \qquad
T_{\calC}^p\subset B_{\calC,0}^p
$$
holds in general, the spaces $B_{\calC,0}^p$ and $T_{\calC}^p$ are still closely related after passing to the quotient space.

\begin{prop}\label{20260619lem03}
Let $1\le p<+\infty$. Define
$$
\mathcal V_{\calC}^p
:=
\left\{
[F_g]\in \dot B_{\calC}^p:\ g\in L^p(Q_0),\ F_g(x,t):=g(x)
\right\}.
$$
Then
$$
\dot T_{\calC}^p\subset \mathcal V_{\calC}^p+\dot B_{\calC,0}^p.
$$
Moreover,
$$
\dot T_{\calC}^p+\dot B_{\calC,0}^p
=
\mathcal V_{\calC}^p+\dot B_{\calC,0}^p.
$$
\end{prop}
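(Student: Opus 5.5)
The plan is to show that for $f\in T_{\calC}^p$ with trace $g:=\textnormal{Tr}_{\calC,p}(f)\in L^p(Q_0)$ (Lemma~\ref{20260610lem01}), the remainder $f-F_g$ lies in $B_{\calC,0}^p$. Then $[f]=[F_g]+[f-F_g]$, and the set identity follows by elementary set algebra.

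\emph{Step 1 (small scales).} For any cube $Q$ and any $u\in L^p(T_Q^{up})$, Minkowski's inequality gives $|u_{T_Q^{up}}|\le(\intavg_{T_Q^{up}}|u|^p)^{1/p}$. Hence
$$
M_{\calC,p}(u,Q)\le 2\Big(\intavg_{T_Q^{up}}|u|^p\Big)^{1/p}.
$$
Applying this with $u=f-F_g$ and summing over any $\calF(\varepsilon)$ gives
$$
[f-F_g]_{\varepsilon,p}\le 2\,\calA_{\varepsilon,p}(f,g)\longrightarrow 0 \qquad \textrm{as } \varepsilon\to 0.
$$

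\emph{Step 2 (large scales).} We still need $f-F_g\in B_{\calC}^p$, i.e.\ uniform boundedness of $[f-F_g]_{\varepsilon,p}$ for all $\varepsilon\in(0,1]$. This is the one point requiring care, since $F_g$ itself is not a priori in $B_{\calC}^p$ for a general $g\in L^p(Q_0)$.
\begin{itemize}
\item By Step 1, choose $\varepsilon_0$ with $[f-F_g]_{\varepsilon,p}\le 1$ for all $\varepsilon<\varepsilon_0$.
\item For $\varepsilon\ge\varepsilon_0$, note that $f-F_g\in L^p(Q_0\times(0,1))$, because $f$ is and $\|F_g\|_{L^p(Q_0\times(0,1))}=\|g\|_{L^p(Q_0)}$.
\item Therefore each term satisfies $M_{\calC,p}(f-F_g,Q)\le 2\,|T_Q^{up}|^{-1/p}\|f-F_g\|_{L^p}\lesssim \varepsilon_0^{-(n+1)/p}\|f-F_g\|_{L^p}$.
\item There are at most $\varepsilon^{1-n}$ terms, which cancels the factor $\varepsilon^{n-1}$. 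So $[f-F_g]_{\varepsilon,p}\lesssim_{\varepsilon_0}\|f-F_g\|_{L^p}$.
\end{itemize}
Thus $f-F_g\in B_{\calC,0}^p$. Consequently $F_g=f-(f-F_g)\in B_{\calC}^p$, so $[F_g]\in\mathcal V_{\calC}^p$. This yields $[f]\in \mathcal V_{\calC}^p+\dot B_{\calC,0}^p$, proving the first inclusion.

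\emph{Step 3 (the identity).}
\begin{itemize}
\item First observe that $\dot B_{\calC,0}^p$ is a linear subspace. Subadditivity of $M_{\calC,p}(\cdot,Q)$ gives $[u+v]_{\varepsilon,p}\le[u]_{\varepsilon,p}+[v]_{\varepsilon,p}$, and we then take $\limsup$ as $\varepsilon\to0$.
\item For ``$\subseteq$'': by the first inclusion, $\dot T_{\calC}^p+\dot B_{\calC,0}^p\subseteq \mathcal V_{\calC}^p+\dot B_{\calC,0}^p+\dot B_{\calC,0}^p=\mathcal V_{\calC}^p+\dot B_{\calC,0}^p$.
\item For ``$\supseteq$'': it suffices to show $\mathcal V_{\calC}^p\subseteq\dot T_{\calC}^p$. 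If $[F_g]\in\mathcal V_{\calC}^p$, then $F_g\in B_{\calC}^p$ and $g\in L^p(Q_0)$. Moreover $\calA_{\varepsilon,p}(F_g,g)=0$ for every $\varepsilon$, exactly as in the proof of Lemma~\ref{20260610eq40}. Hence $\textnormal{Tr}_{\calC,p}(F_g)=g$ and $F_g\in T_{\calC}^p$.
\end{itemize}
This gives $\mathcal V_{\calC}^p+\dot B_{\calC,0}^p\subseteq \dot T_{\calC}^p+\dot B_{\calC,0}^p$, completing the identity.
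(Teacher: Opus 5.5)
Your proof is correct and follows essentially the same route as the paper. Both arguments use the decomposition $[f]=[F_g]+[f-F_g]$, the bound of $M_{\calC,p}(u,Q)$ by the $L^p$-average of $|u|$ over $T_Q^{up}$ (giving $[f-F_g]_{\varepsilon,p}\lesssim \calA_{\varepsilon,p}(f,g)\to 0$), and the identity $\calA_{\varepsilon,p}(F_g,g)=0$ for $\mathcal V_{\calC}^p\subset \dot T_{\calC}^p$. The only difference is the order: the paper first shows $F_g\in B_{\calC}^p$ via $[F_g]_{\varepsilon,p}\lesssim [f]_{\varepsilon,p}+\calA_{\varepsilon,p}(f,g)$ and only then treats $f-F_g$. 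Your explicit large-scale estimate, using $f-F_g\in L^p(Q_0\times(0,1))$, supplies the justification for the paper's unproved claim that $\calA_{\varepsilon,p}(f,g)$ is bounded on all of $(0,1]$.
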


\begin{proof}
Let $[f]\in \dot T_{\calC}^p$, and let
$$
g:=\textnormal{Tr}_{\calC,p}(f).
$$
Then, by the definition of the $B_{\calC}^p$-trace,
$$
\lim_{\varepsilon\to0}\mathcal A_{\varepsilon,p}(f,g)=0.
$$
We first observe that $F_g\in B_{\calC}^p$. Indeed, for every cube $Q\subset Q_0$, by the triangle inequality,
$$
M_{\calC,p}(F_g,Q)
\lesssim M_{\calC,p}(f,Q)+
\left(\intavg_{T_Q^{\textrm{up}}}
|f-F_g|^p\,dV \right)^{1/p}.
$$
Hence, after summing over all admissible families $\calF(\varepsilon)$ and taking the supremum, one obtains
$$
[F_g]_{\varepsilon,p}
\lesssim
[f]_{\varepsilon,p}
+\mathcal A_{\varepsilon,p}(f,g).
$$
Since $f\in B_{\calC}^p$ and $\mathcal A_{\varepsilon,p}(f,g)$ is bounded for $0<\varepsilon \le 1$, it follows that $F_g\in B_{\calC}^p$.

\vspace{0.1cm}

Now set
$$
u:=f-F_g.
$$
Then $u\in B_{\calC}^p$. Moreover, for every cube $Q\subset Q_0$,
$$
M_{\calC,p}(u,Q)
\lesssim \left(
\intavg_{T_Q^{\textrm{up}}}
|u|^p\,dV
\right)^{1/p}.
$$
Therefore,
$$
[u]_{\varepsilon,p}
\lesssim \mathcal A_{\varepsilon,p}(f,g)
\to 0
\qquad \textrm{as } \varepsilon\to0,
$$
which gives $u\in B_{\calC,0}^p$. Consequently,
$$
[f]=[F_g]+[u]\in \mathcal V_{\calC}^p+\dot B_{\calC,0}^p,
$$
which proves
\begin{equation} \label{20260610eq45}
\dot T_{\calC}^p\subset \mathcal V_{\calC}^p+\dot B_{\calC,0}^p.
\end{equation} 

\medskip

On the other hand, for every $[F_g]\in \mathcal V_{\calC}^p$, one has $\mathcal A_{\varepsilon,p}(F_g,g)=0$ for all $0<\varepsilon \le 1$, and hence $F_g\in T_{\calC}^p$ with $\textnormal{Tr}_{\calC,p}(F_g)=g$.
Therefore,
\begin{equation} \label{20260610eq46}
\mathcal V_{\calC}^p\subset \dot T_{\calC}^p.
\end{equation} 
Combining the two inclusions \eqref{20260610eq45} and \eqref{20260610eq46} gives
$$
\dot T_{\calC}^p+\dot B_{\calC,0}^p
=
\mathcal V_{\calC}^p+\dot B_{\calC,0}^p.
$$
This completes the proof for the second inclusion.
\end{proof}

\begin{rem}\label{20260619ques01}
Lemma \ref{20260610lem65} raises the following natural question: can one characterize the closure
$
\overline{\dot T_{\calC}^p}^{,\dot B_{\calC}^p}?
$
This question appears to be of independent interest, but its answer eludes us for now.
\end{rem}

\subsection{The rigidity theorem in the Carleson setting}

Finally, we return to the original motivation, namely Theorem \ref{BBMMain}, in the Carleson setting. Our main result in this direction is captured by the result below, which constitutes a natural application of the classical BBM-rigidity result.

\begin{thm} \label{20260610BBMCarleson}
Let $1 \le p<+\infty$ and $f \in T_{\calC}^p \cap B_{\calC, 0}^p$ with $g=\textnormal{Tr}_{\calC, p}(f)$. If $g$ is integer-valued, then $g$ is constant $a.e.$ on $Q_0$. 
\end{thm}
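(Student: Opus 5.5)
The plan is to reduce to Lemma~\ref{20260608lem02}: we show that $F_g\in B_{\calC,0}^p$, after which that lemma (hence Theorem~\ref{BBMMain}) forces $g$ to be constant a.e.

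By Lemma~\ref{20260610lem01}, the trace $g$ lies in $L^p(Q_0)$, so $F_g\in L^p(Q_0\times(0,1))$. For every cube $Q\subseteq Q_0$ we use the triangle inequality in $L^p(T_Q^{up},dV/|T_Q^{up}|)$. Since the average of a constant over the tent is that constant, subtracting averages costs at most a factor $2$:
$$
M_{\calC,p}(F_g,Q)\le M_{\calC,p}(f,Q)+M_{\calC,p}(f-F_g,Q)\le M_{\calC,p}(f,Q)+2\left(\intavg_{T_Q^{up}}|f-F_g|^p\right)^{1/p}.
$$
We multiply by $\varepsilon^{n-1}$, sum over an admissible family $\calF(\varepsilon)$, and take the supremum. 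This gives
$$
[F_g]_{\varepsilon,p}\le [f]_{\varepsilon,p}+2\,\calA_{\varepsilon,p}(f,g).
$$
This is the same estimate that appears in the proof of Proposition~\ref{20260619lem03}.

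Boundedness in $\varepsilon$ shows $F_g\in B_{\calC}^p$. Now take $\limsup_{\varepsilon\to0}$. The first term vanishes because $f\in B_{\calC,0}^p$, and the second vanishes by the definition of the trace. Hence $[F_g]_p=0$, so $F_g\in B_{\calC,0}^p$. Since $g$ is $\Z$-valued and measurable, Lemma~\ref{20260608lem02} yields that $g$ is constant a.e.

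There is no real obstacle here. The only points needing care are the following.
\begin{itemize}
\item The trace condition controls the full $L^p$ averages of $f-F_g$, not merely its oscillations. This is exactly what makes the triangle-inequality step legitimate, and it is why the trace was defined through the bounded-average quantity \eqref{20260610eq30}.
\item One must confirm that $F_g$ is measurable and $p$-integrable, so that it qualifies as an element of $B_{\calC}^p$. This follows from Lemma~\ref{20260610lem01}.
\end{itemize}
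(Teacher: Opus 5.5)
Your proposal is correct and is essentially the paper's argument. The paper chains $\intavg_Q|g-g_Q|\le M_{\calC,p}(F_g,Q)\lesssim M_{\calC,p}(f,Q)+\bigl(\intavg_{T_Q^{up}}|f-F_g|^p\bigr)^{1/p}$ to get $[g]^B_{\varepsilon}\lesssim[f]_{\varepsilon,p}+\calA_{\varepsilon,p}(f,g)\to0$ and applies Theorem~\ref{BBMMain} directly, whereas you first show $F_g\in B_{\calC,0}^p$ and then invoke Lemma~\ref{20260608lem02}, whose proof is exactly that first inequality (your check that $F_g\in L^p$ via Lemma~\ref{20260610lem01} is bookkeeping the paper leaves implicit).
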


\begin{proof}
As usual, denote by $F_g(x, t)$ the extension of $g$. Since $F_g$ does not depend on $t$,  $(F_g)_{T_Q^{\textnormal{up}}}=g_Q$. Hence, for any $\varepsilon$-cube $Q \subseteq Q_0$, we have
\begin{align*}
\intavg_Q \left|g-g_Q\right| dx 
&\le \left(\intavg_Q \left|g-g_Q\right|^p dx \right)^{1/p} \\
& =M_{\calC, p}(F_g, Q) \\
& \lesssim M_{\calC, p}(f, Q)+ \left(\intavg_{T_Q^{up}} |f-F_g|^p \right)^{1/p}.
\end{align*}
Thus, for any admissible family $\calF(\varepsilon)$, we can write
\begin{align*}
\varepsilon^{n-1} \sum_{Q \in \calF(\varepsilon)} \intavg_Q \left|g-g_Q\right| dx  
& \lesssim \varepsilon^{n-1} \sum_{Q \in \calF(\varepsilon)} M_{\calC, p}(f, Q)  \\
& \quad +\varepsilon^{n-1} \sum_{Q \in \calF(\varepsilon)}\left(\intavg_{T_Q^{up}} |f-F_g|^p \right)^{1/p},
\end{align*}
which further gives
$$
[g]^B_{\varepsilon} \le [f]_{\varepsilon, p}+\calA_{\varepsilon, p}(f, g),
$$
with $\calA_{\varepsilon, p}(f, g)$ as defined in \eqref{20260610eq30}. Now, since $f \in B_{\calC, 0}^p$, we have 
$$
\lim_{\varepsilon \to 0}\, [f]_{\varepsilon, p}=0.
$$
Moreover, since $g=\textnormal{Tr}_{\calC, p}(f)$, 
$$
\lim_{\varepsilon \to 0} \calA_{\varepsilon, p}(f, g)=0.
$$
We conclude that
$$
\lim_{\varepsilon \to 0}\, [g]_{\varepsilon}^B=0,
$$
that is, $g \in B_0$. Therefore, by Theorem \ref{BBMMain}, $g$ must be a constant. 
\end{proof}

\end{document}